\providecommand{\U}[1]{\protect\rule{.1in}{.1in}}
\providecommand{\U}[1]{\protect\rule{.1in}{.1in}}
\newtheorem{theorem}{Theorem}
\theoremstyle{plain}
\newtheorem{acknowledgement}{Acknowledgement}
\newtheorem{corollary}{Corollary}
\newtheorem{example}{Example}
\newtheorem{lemma}{Lemma}
\newtheorem{proposition}{Proposition}
\newtheorem{remark}{Remark}
\numberwithin{equation}{section}
\begin{document}
\title[ ]{Homeomorphisms generated from overlapping affine iterated function systems}
\author{Michael F. Barnsley}
\address{Department of Mathematics\\
Australian National University\\
Canberra, ACT, Australia\\
 }
\author{Brendan Harding}
\address{Department of Mathematics\\
Australian National University\\
Canberra, ACT, Australia\\
 }
\author{Andrew Vince}
\address{Department of Mathematics\\
University of Florida\\
Gainesville, FL 32611-8105, USA\\
 }
\email{michael.barnsley@maths.anu.edu.au,}
\urladdr{http://www.superfractals.com{}}

\begin{abstract}
We develop the theory of fractal homeomorphism generated from pairs
overlapping affine iterated function systems.

\end{abstract}
\maketitle

\section{\label{introsec}Introduction}

We consider a pair of dynamical systems, $W:[0,1]\rightarrow\lbrack0,1]$ and
$L:[0,1]\rightarrow\lbrack0,1],$ as illustrated in Figure \ref{figaug17}. $W$
is differentiable on both $[0,\rho]$ and $(\rho,1]$, with slope greater than
$1/\lambda>1,$ and $L$ is piecewise linear with slope $1/\gamma>1$. If
$h(W)=-\ln$ $\gamma$ is the topological entropy of $W$ then there is
$p\in(0,1)$ such that the two systems are topologically conjugate, i.e. there
is a homeomorphism $H:[0,1]\rightarrow\lbrack0,1]$ such that $W=HLH^{-1}$.
This follows from \cite[Theorem 1]{denker}. It can also be deduced from
\cite{parry}. What is not known, prior to this work, is the explicit
relationship between $W$, on the left in Figure \ref{figaug17}$,$ and the
parameters $p$ and $\gamma$ that uniquely define $L$, on the right in Figure
\ref{figaug17}.%

\begin{figure}[ptb]%
\centering
\includegraphics[
height=2.5598in,
width=5.5486in
]%
{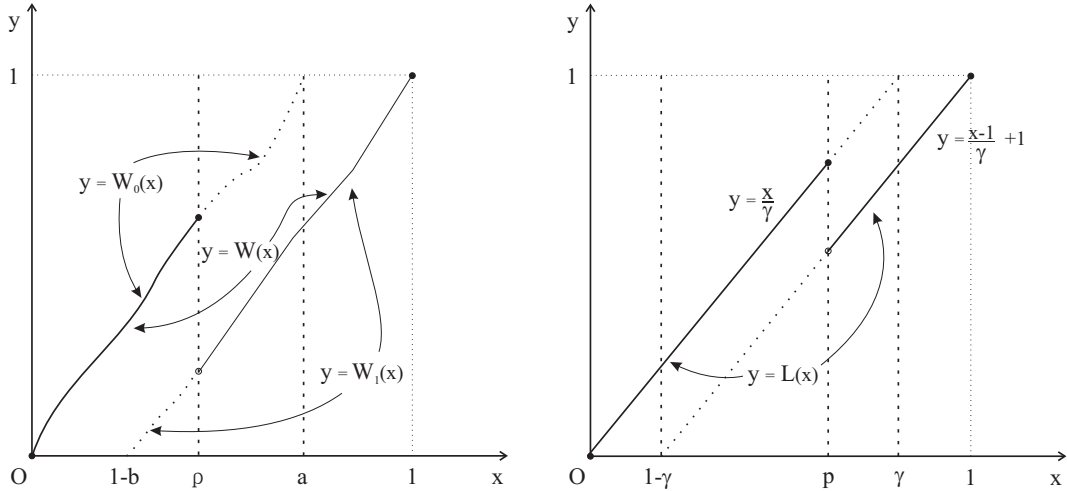}%
\caption{Theorem \ref{theorem1} provides a conditions under which the two
systems illustrated here are topologically conjugate. Theorem
\ref{theorem2ndpart} provides a geometrical characterization of the
conjugacy.}%
\label{figaug17}%
\end{figure}

In this paper we prove constructively the existence of $L$ and establish
analytic expressions, that use only two itineraries of $W$, from which both
topological invariants $\gamma$ and $p$ can be deduced. We also provide a
direct construction for the graph of $H$. While $\gamma$ has been much
studied, the parameter $p$ is also of interest because it measures the
asymmetry of the set of itineraries of $W$. Indeed, one motivation is our
desire to establish, and to be able to compute, fractal homeomorphisms between
attractors of various overlapping iterated function systems, as explained in
Section \ref{masksec}, for applications such as those in \cite{BHI}. Our
approach is of a constructive character, similar to that in \cite{milnor}, but
founded in the theory of overlapping iterated function systems and associated
families of discontinuous dynamical systems. We make use of an analogue of the
kneading determinant of \cite{milnor}, appropriate for discontinuous interval
maps, and thereby avoid measure-theoretic existential demonstrations such as
those in \cite{parry, hoffbauer}.

Let $I=\{0,1\}$ and $I^{\infty}=\{0,1\}\times\{0,1\}\times...$ with the
product topology. Each point $x\in\lbrack0,1]$ has a unique itinerary
$\tau(x)\in I,$ where the $k^{th}$ component of $\tau(x)$, denoted by
$\tau(x)_{k}$, equals $0$ or $1$ according as $W^{k}(x)\in\lbrack0,\rho],$ or
$(\rho,1],$ respectively, for all $k\in\mathbb{N}$. Corresponding to each
$x\in\lbrack0,1)$ we associate an analytic function%
\[
\tau(x)(\zeta):=(1-\zeta)\sum\limits_{k=0}^{\infty}\tau(x)_{k}\zeta^{k}\text{,
}\zeta\in\mathbb{C}\text{, }\left\vert \zeta\right\vert <1.
\]
Our first main result specifies the invariants $p$ and $\gamma$ in terms of
two of these functions, and describes the homeomorphism $H$.

\begin{theorem}
\label{theorem1} The topological entropy of $W$ is $-\ln\gamma$ where $\gamma$
is the unique solution of%
\begin{equation}
\tau(\rho)(\gamma)=\tau(\rho+)(\gamma),\text{ with }\tau(\rho)(\varsigma
)<\tau(\rho+)(\varsigma)\text{ for }\varsigma\in\lbrack0.5,\gamma)\text{,}
\label{innovationeq}%
\end{equation}
and
\[
p=\tau(\rho)(\gamma).
\]
Moreover,
\[
H(x)=\tau(x)(\gamma)\text{, for all }x\in\lbrack0,1),\text{ and }%
H(1)=1\text{.}%
\]

\end{theorem}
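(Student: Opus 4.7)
The plan is to construct $H$ and $L$ directly from the formula $H(x)=\tau(x)(\gamma)$. For any candidate $(p,\gamma)\in(0,1)^2$, let $L_{p,\gamma}$ be the piecewise linear map defined by $L_{p,\gamma}(y)=y/\gamma$ on $[0,p]$ and $L_{p,\gamma}(y)=(y-(1-\gamma))/\gamma$ on $(p,1]$; it has slope $1/\gamma$ on each piece, fixes $0$ and $1$, and matches the downward discontinuity of $W$ at $\rho$. The engine of the argument is the shift identity $\tau(W(x))_k=\tau(x)_{k+1}$, which yields the functional equation
$$\tau(x)(\zeta)=(1-\zeta)\tau(x)_0+\zeta\,\tau(W(x))(\zeta).$$
Setting $H(x)=\tau(x)(\gamma)$, this reads $H(W(x))=H(x)/\gamma$ on $[0,\rho]$ and $H(W(x))=(H(x)-(1-\gamma))/\gamma$ on $(\rho,1]$, so once $H$ is shown to be a continuous monotone bijection with $H(\rho)=H(\rho+)=p$, the conjugacy $H\circ W=L_{p,\gamma}\circ H$ is automatic.

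I would then analyse the ``kneading function'' $\Delta(\varsigma):=\tau(\rho+)(\varsigma)-\tau(\rho)(\varsigma)$, an analogue of the Milnor kneading determinant. Since $\tau(\rho)_0=0$ and $\tau(\rho+)_0=1$, one has $\Delta(1/2)>0$. Using the expansion hypothesis $>1/\lambda>1$, which controls the asymptotic densities of $1$'s in the two itineraries, I would show $\Delta$ must attain the value $0$ in $[1/2,1)$, take $\gamma$ to be the least such zero, and set $p:=\tau(\rho)(\gamma)$; this makes \eqref{innovationeq} hold by construction. Continuity of $H$ at $\rho$ is then literally this equation, while continuity at any other point is immediate from expansion (nearby points share arbitrarily long itinerary prefixes). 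Monotonicity of $H$ follows from the fact that $x\mapsto\tau(x)$ is non-decreasing in lexicographic order, combined with the fact that, on the admissible itineraries of $W$, the evaluation $\sigma\mapsto(1-\gamma)\sum\sigma_k\gamma^k$ is order-preserving at $\varsigma=\gamma$. Injectivity of $H$ uses the expansion bound once more. Since $L_{p,\gamma}$ is piecewise linear with uniform slope $1/\gamma$, its topological entropy $-\ln\gamma$ is a conjugacy invariant, matching that of $W$.

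The main obstacle is tying the uniqueness and transversality of the crossing $\tau(\rho)(\gamma)=\tau(\rho+)(\gamma)$ to the order-preserving property of $H$: both should reduce to positivity of $\Delta$ on $[1/2,\gamma)$. The subtlety is that, without admissibility, a naive coefficient-wise argument for monotonicity of $\sigma\mapsto(1-\gamma)\sum\sigma_k\gamma^k$ only works when $\gamma\le 1/2$, so one must genuinely exploit the fact that admissible itineraries of $W$ cannot realise the ``worst-case'' tails comparing $\tau(\rho)$ and $\tau(\rho+)$. This is precisely the content of the strict inequality in \eqref{innovationeq}, which pins down $\gamma$ and simultaneously guarantees that $H$ is strictly increasing.
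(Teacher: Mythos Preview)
Your plan parallels the paper's argument closely: the functional equation $\tau(x)(\zeta)=(1-\zeta)\tau(x)_0+\zeta\,\tau(W(x))(\zeta)$ is exactly the engine the paper exploits, and the paper likewise reduces everything to showing that $\pi_\gamma:\sigma\mapsto(1-\gamma)\sum_k\sigma_k\gamma^k$ is order-preserving on the admissible itineraries $\overline{\Omega}$. You have correctly located the crux. The gap is that you do not actually supply this step: saying that ``the strict inequality in \eqref{innovationeq} \ldots\ simultaneously guarantees that $H$ is strictly increasing'' is an assertion, not an argument. Knowing that $\pi_\varsigma(\alpha)<\pi_\varsigma(\beta)$ for $\varsigma<\gamma$ does not by itself prevent some \emph{other} admissible pair $\sigma\prec\omega$ from crossing earlier.

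The paper closes this gap with a minimality argument (its Lemma~\ref{lemmaA}). Set $\hat\varsigma$ to be the infimum of $\varsigma\in(0,1)$ for which there exist $\sigma,\omega\in\overline{\Omega}$ with $\sigma_0=0$, $\omega_0=1$ and $\pi_\varsigma(\sigma)=\pi_\varsigma(\omega)$; compactness gives witnesses at $\hat\varsigma$. If $\hat\varsigma<\gamma$, then at $\varsigma=\hat\varsigma$ one still has $\pi_{\hat\varsigma}(\alpha)<\pi_{\hat\varsigma}(\beta)$, while at $\varsigma=1/3$ the map $\pi_{1/3}$ is strictly order-preserving on all of $I^\infty$, so $\pi_{1/3}(\sigma)\le\pi_{1/3}(\alpha)<\pi_{1/3}(\beta)\le\pi_{1/3}(\omega)$. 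By the intermediate value theorem the curve $\varsigma\mapsto\pi_\varsigma(\sigma)$ must meet $\varsigma\mapsto\pi_\varsigma(\alpha)$ (or $\pi_\varsigma(\omega)$ must meet $\pi_\varsigma(\beta)$) at some $\xi<\hat\varsigma$. Shifting that new pair until the first disagreement, and using shift-invariance of $\overline{\Omega}$, produces admissible $\tilde\sigma,\tilde\omega$ with $\tilde\sigma_0=0$, $\tilde\omega_0=1$ and $\pi_\xi(\tilde\sigma)=\pi_\xi(\tilde\omega)$, contradicting minimality of $\hat\varsigma$. Hence $\hat\varsigma=\gamma$, which is exactly the order-preservation you need. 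A separate short argument (the paper's Lemma~\ref{lemmastrict}) upgrades weak to strict monotonicity on $\Omega$.

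A second, smaller gap: your proposed existence argument for $\gamma$ via ``asymptotic densities of $1$'s in the two itineraries'' is not obviously workable, since $\Delta(\varsigma)$ is not governed by densities in any simple way near $\varsigma=1$. The paper instead bounds $\Delta(\varsigma)<(1-\varsigma)/(1+\varsigma)$ using $S\beta\prec\alpha\prec\beta\prec S\alpha$ together with the order-preservation just established (Lemma~\ref{lemmaC}), and then rules out the degenerate case $\gamma=1$ by an entropy comparison: if $\pi_\varsigma$ were order-preserving on $\Omega$ for every $\varsigma<1$, the image $\pi_\varsigma\tau([0,1])$ would sit inside an $L_{p,\varsigma}$-invariant set for all $\varsigma<1$, forcing the entropy of $W$ below $-\ln\varsigma$ for every $\varsigma$, contradicting the slope bound $1/\lambda>1$.
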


Here $\tau(\rho+)=\underset{\varepsilon\rightarrow0}{\lim}$ $\tau
(\rho+\left\vert \varepsilon\right\vert )$. Our second main result provides a
geometrical construction of the homeomophism $H.$ Let
\[
\square=\{(x,y)\in\mathbb{R}^{2}:0\leq x\leq1,0\leq y\leq1\},
\]
and let $\mathbb{H}$ denote the nonempty compact subsets of $\square$ with the
Hausdorff topology.

\begin{theorem}
\label{theorem2ndpart} If $gr(H)$ is the graph of $H$ then%
\[
gr(H)=\bigcap\limits_{k\in\mathbb{N}}r^{k}(\square)=\lim_{k\rightarrow\infty
}r^{k}(\square)
\]
where
\[
r:\mathbb{H\rightarrow H\ni}S\mapsto F_{0}(S\cap P)\cup F_{1}(S\cap Q),
\]%
\[
F_{0}:\square\rightarrow\square\mathbb{\ni}(x,y)\mapsto(\gamma x,W_{0}%
^{-1}(y)),F_{1}:\square\rightarrow\square\mathbb{\ni}(x,y)\mapsto(\gamma
x+1-\gamma,W_{1}^{-1}(y)),
\]%
\[
P=\{(x,y):x\leq p/\gamma,y\leq W_{0}(\rho)\}\text{, }Q=\{(x,y):x\geq
p/\gamma+1-1/\gamma,y\geq W_{1}(\rho)\}.
\]

\end{theorem}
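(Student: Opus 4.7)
The plan is to identify $gr(H)$ as the unique compact set invariant under $r$, produce it as the decreasing intersection of $r^k(\square)$, and observe that this intersection equals the Hausdorff limit.

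\textbf{Step 1 (invariance).} Theorem \ref{theorem1} gives the conjugacy $H\circ L = W\circ H$; on the left branch of $L$ (where $L(x)=x/\gamma$ and $H([0,p])=[0,\rho]$) this reads $H(x/\gamma) = W_0(H(x))$ for $x\in[0,p]$. Substituting $u = x/\gamma$ and inverting $W_0$, one obtains $W_0^{-1}(H(u)) = H(\gamma u)$ for $u\in[0,p/\gamma]$, with $H(u)\leq W_0(\rho)$ automatic in this range---precisely the condition $(u,H(u))\in P$. Hence $F_0$ carries $gr(H)\cap P$ bijectively onto the piece of $gr(H)$ above $[0,p]$, and symmetrically $F_1$ carries $gr(H)\cap Q$ onto the piece above $[p,1]$. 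Their union is $gr(H)$, proving $r(gr(H))=gr(H)$.

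\textbf{Step 2 (nested chain).} The set map $r$ is monotone on $\mathbb{H}$, and a direct check gives $r(\square) = F_0(P)\cup F_1(Q) \subseteq \square$, so $r^{k+1}(\square)\subseteq r^k(\square)$ is a decreasing sequence of nonempty compact sets with intersection $K$. By the standard result on decreasing compact families, $K$ is nonempty compact and equals $\lim_{k\to\infty} r^k(\square)$ in the Hausdorff metric. Invariance combined with monotonicity gives $gr(H) = r^k(gr(H))\subseteq r^k(\square)$ for all $k$, so $gr(H)\subseteq K$.

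\textbf{Step 3 (reverse inclusion).} The maps $F_0,F_1$ contract horizontally by factor $\gamma\in(0,1)$, and the inverse branches $W_i^{-1}$ contract vertically by Lipschitz constant at most $\lambda<1$ (since $|W'|>1/\lambda$). The selectors $P,Q$ further force $F_0(\square\cap P)\subseteq[0,p]\times[0,\rho]$ and $F_1(\square\cap Q)\subseteq[p,1]\times[\rho,1]$, rectangles meeting only at $(p,\rho)$. Iterating, $r^k(\square)$ decomposes into compact pieces indexed by addresses $\sigma\in\{0,1\}^k$, each of diameter $O(\max(\gamma,\lambda)^k)$. Any $(x_0,y_0)\in K$ thus inherits an infinite address $\sigma\in\{0,1\}^\infty$ (unique except possibly at $(p,\rho)$) and equals $\lim_k F_{\sigma_1}\circ\cdots\circ F_{\sigma_k}(u)$ for any $u\in\square$. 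Choosing $u\in gr(H)$ with a compatible starting configuration, each iterate lies on $gr(H)$ by Step 1, so by closedness the limit does too. This gives $K\subseteq gr(H)$ and hence $K = gr(H)$.

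\textbf{Main obstacle.} The principal delicacy is in Step 1: verifying that the horizontal cutoffs $p/\gamma$ and $p/\gamma+1-1/\gamma$, together with the vertical cutoffs $W_0(\rho)$ and $W_1(\rho)$ defining $P$ and $Q$, are exactly calibrated so that $F_0(gr(H)\cap P)$ and $F_1(gr(H)\cap Q)$ meet precisely at the branch point $(p,\rho)$ and reproduce $gr(H)$ with no gap and no double cover. This calibration is a direct consequence of the identification $H(\rho) = p$ delivered by Theorem \ref{theorem1}.
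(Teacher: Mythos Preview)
The paper's own proof of Theorem \ref{theorem2ndpart} consists of the single line ``This is straightforward.'' There is therefore no detailed argument to compare against; your proposal supplies exactly the kind of argument the paper omits, and its overall structure (invariance of $gr(H)$ under $r$, nested decreasing iterates of $\square$, contraction to identify the intersection) is correct and natural.

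Two small points are worth tightening. First, there is an orientation slip: in Step 1 you (correctly, for this theorem) use $H([0,p])=[0,\rho]$, i.e.\ $H(p)=\rho$, but in the final paragraph you write ``$H(\rho)=p$ delivered by Theorem \ref{theorem1}''. The paper is itself inconsistent here (the introduction's $W=HLH^{-1}$ gives $H(p)=\rho$, while the explicit formula $H(x)=\tau(x)(\gamma)$ in Theorem \ref{theorem1}, with $\tau$ the $W$-itinerary, gives $H(\rho)=p$). For Theorem \ref{theorem2ndpart} the convention $H(p)=\rho$ is the operative one, since $F_0,F_1$ act by the $L$-maps in the first coordinate and the $W$-maps in the second.

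Second, the phrase ``choosing $u\in gr(H)$ with a compatible starting configuration, each iterate lies on $gr(H)$'' in Step 3 is the one soft spot: applying $F_{\sigma_j}$ to a graph point keeps you on the graph only when that point lies in the correct mask region, which is not automatic for an arbitrary $u$. A cleaner finish avoids this altogether. From $K=r(K)$ one extracts, for any $(x_0,y_0)\in K$, a backward orbit $(x_k,y_k)\in K$ with $(x_{k-1},y_{k-1})=F_{i_k}(x_k,y_k)$ and $(x_k,y_k)\in P$ or $Q$. Since $F_0(P)=[0,p]\times[0,\rho]$ and $F_1(Q)=[p,1]\times[\rho,1]$, one has $i_k=0\Leftrightarrow x_{k-1}\le p\Leftrightarrow y_{k-1}\le\rho$ (with the usual boundary ambiguity). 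Thus the single sequence $(i_k)$ is simultaneously an $L$-itinerary of $x_0$ and a $W$-itinerary of $y_0$; since $L$ and $W$ share the address space $\Omega$ (Lemma \ref{lemmaB} and Corollary \ref{corollary1}), this forces $y_0=H(x_0)$ directly, and $K\subseteq gr(H)$ follows.
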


The expression $gr(H)=\bigcap\limits_{k\in\mathbb{N}}r^{k}(\square
)=\lim_{k\rightarrow\infty}r^{k}(\square)$ is a localized version of the
expression $A=\lim_{k\rightarrow\infty}\mathcal{F}^{k}(\square)$ for the
attractor $A$ of a hyperbolic iterated function system $\mathcal{F}$ on
$\square$.

This paper is, in part, a continuation of \cite{mihalache}. In
\cite{mihalache} we analyse in some detail the topology and structure of the
address space/set of itineraries associated with $W$ and $W_{+}$. In this
paper we recall and use key results from \cite{mihalache}; but here the point
of view is that of masked iterated function systems, whereas in
\cite{mihalache} the point of view is classical symbolic dynamics. The novel
innovation in this paper is the introduction and exploitation of the family of
analytic functions in equation (\ref{innovationeq}), yielding Theorem
\ref{theorem1}.

** Special case : the affine case; implicit function theorem gives the
dependence of p on a,b,and rho.

** Outline of sections and their contents, with focus on how the proofs work.

** Comments on related work by Konstantin Igudesman \cite{igudesman1,
igudesman2}.

\section{\label{backgroundsec}Background and Notation}

\subsection{\label{IFSsec}Iterated functions systems, their attractors, coding
maps, sections and address spaces}

Let $\mathbb{X}$ be a complete metric space. Let $f_{i}:\mathbb{X\rightarrow
}\mathbb{X}$ ($i=0,1$) be contraction mappings. Let $\mathbb{H=H(X)}$ be the
nonempty compact subsets of $\mathbb{X}$. Endow $\mathbb{H}$ with the
Hausdorff metric. We use the same symbol $\mathcal{F}$ for the hyperbolic
iterated function system $\mathcal{(}\mathbb{X};f_{0},f_{1}),$ for the set of
maps $\{f_{0},f_{1}\}$, and for the contraction mapping%
\[
\mathcal{F}:\mathbb{H\rightarrow H}\text{, }S\mapsto f_{0}(S)\cup
f_{1}(S)\text{.}%
\]
Let $A\in\mathbb{H}$ be the fixed point of $\mathcal{F}$. We refer to $A$ as
the \textit{attractor} of $\mathcal{F}$.

Let $I=\{0,1\}$ and let $I^{\infty}=\{0,1\}\times\{0,1\}\times...$ have the
product topology induced from the discrete topology on $I$. For $\sigma\in
I^{\infty}$ we write $\sigma=\sigma_{0}\sigma_{1}\sigma_{2}\ldots,$ where
$\sigma_{k}\in I$ for all $k\in\mathbb{N}$. The product topology on
$I^{\infty}$ is the same as the topology induced by the metric $d(\omega
,\sigma)=2^{-k}$ where $k$ is the least index such that $\omega_{k}\neq
\sigma_{k}$. It is well known that $(I^{\infty},d)$ is a compact metric space.
For $\sigma\in I^{\infty}$ and $n\in\mathbb{N}$ we write $\sigma|_{n}%
=\sigma_{0}\sigma_{1}\sigma_{2}...\sigma_{n}$. The \textit{coding map} for
$\mathcal{F}$ is
\[
\pi:I^{\infty}\rightarrow A\text{, }\sigma\mapsto\lim_{k\rightarrow\infty
}f_{\sigma|_{k}}(x),
\]
where $x\in\mathbb{X}$ is fixed and $f_{\sigma|_{k}}(x)=f_{\sigma_{0}}\circ
f_{\sigma_{1}}\circ...\circ f_{\sigma_{k}}(x).$ The map $\pi:I^{\infty
}\rightarrow A$ is a continuous surjection, independent of $x$. We refer to an
element of $\pi^{-1}(x)$ as an \textit{address} of $x\in A$. A
\textit{section} for $\mathcal{F}$ is a map $\tau:A\rightarrow I^{\infty}$
such that $\pi\circ\tau=i_{A}$, the identity map on $A$. We also say that
$\tau$ is a section \textit{of} $\pi$. We refer to $\Omega=\tau(A)$ as an
\textit{address space} for $A$ (associated with $\mathcal{F}$) because
$\Omega$ is a subset of $I^{\infty}$ and it is in bijective correspondence
with $A$.

We write $\overline{E}$ to denote the closure of a set $E$. But we write
$\overline{0}=000...,\overline{1}=111...\in I^{\infty}$. For $\sigma
=\sigma_{0}\sigma_{1}\sigma_{2}\ldots\in I^{\infty}$ we write $0\sigma$ to
mean $0\sigma_{0}\sigma_{1}\sigma_{2}\ldots\in I^{\infty}$ and $1\sigma
=0\sigma_{0}\sigma_{1}\sigma_{2}\ldots\in I^{\infty}$.

\subsection{\label{ordersec}Order relation on code space, top sections and
shift invariance}

We define a total order relation $\preceq$ on $I^{\infty},$ and on $I^{n}$ for
any $n\in\mathbb{N}$, by $\sigma\prec\omega$ if $\sigma\neq\omega$ and
$\sigma_{k}<\omega_{k}$ where $k$ is the least index such that $\sigma_{k}%
\neq\omega_{k}$. For $\sigma,\omega\in I^{\infty}$ with $\sigma\preceq\omega$
we define%
\begin{align*}
\lbrack\sigma,\omega]  &  :=\{\zeta\in I^{\infty}:\sigma\preceq\zeta
\preceq\omega\},(\sigma,\omega):=\{\zeta\in I^{\infty}:\sigma\prec\zeta
\prec\omega\},\\
(\sigma,\omega]  &  :=\{\zeta\in I^{\infty}:\sigma\prec\zeta\preceq
\omega\},[\sigma,\omega):=\{\zeta\in I^{\infty}:\sigma\preceq\zeta\prec
\omega\}.
\end{align*}

It is helpful to note the following alternative characterization of the order
relation $\preceq$ on $I^{\infty}.$ Since the standard Cantor set
$C\subset\lbrack0,1]\subset\mathbb{R}$ is totally disconnected, and is the
attractor of the iterated function system $([0,1];f_{0}(x)=x/3,f_{1}%
(x)=x/3+2/3)$, the coding map $\pi_{C}:I^{\infty}\rightarrow C,\sigma
\mapsto\sum\limits_{k=0}^{\infty}2\sigma_{k}/3^{k+1},$ is a homeomorphism. The
order relation $\preceq$ on $I^{\infty}$ can equivalently be defined by
$\sigma\prec\omega$ if and only if $\pi_{C}(\sigma)<\pi_{C}(\omega)$.

The order relation $\prec$ on $I^{\infty}$ can be used to define the
corresponding \textit{top section }$\mathcal{\tau}_{top}:A\rightarrow
I^{\infty}$ for $\mathcal{F}$, according to
\[
\mathcal{\tau}_{top}(x)=\max\pi^{-1}(x)\text{.}%
\]
Top sections are discussed in \cite{monthly}. Let $\Omega_{top}=\mathcal{\tau
}_{top}(A)$ and let $S:I^{\infty}\rightarrow I^{\infty}$ denote the left-shift
map $\sigma_{0}\sigma_{1}\sigma_{2}...\mapsto\sigma_{1}\sigma_{2}\sigma
_{3}...$. We have
\[
S(\Omega_{top})\subseteq\Omega_{top}%
\]
with equality when $f_{1}$ is injective, \cite[Theorem 2]{monthly}.

We say that a \textit{section} $\tau$ \textit{is shift invariant} when
$S(\Omega)=\Omega$, and \textit{shift-forward invariant when }$S(\Omega
)\subset\Omega.$ The examples considered later in this paper involve shift
invariant sections.

The branches of $S^{-1}$ are $s_{i}:I^{\infty}\rightarrow I^{\infty}$ with
$s_{i}(\sigma)=i\sigma$ $(i=0,1).$ Both $s_{0}$ and $s_{1}$ are contractions
with contractivity $1/2$. $I^{\infty}$ is the attractor of the iterated
function system $(I^{\infty};s_{0},s_{1})$. We write $2^{I^{\infty}}$ to
denote the set of all subsets of $I^{\infty}.$

\subsection{\label{masksec}Masks, masked dynamical systems and masked
sections}

Sections are related to masks. A \textit{mask} $\mathcal{M}$ \textit{for}
$\mathcal{F}$ is a pair of sets$,$ $M_{i}\subset f_{i}(A)$ $(i=0,1)$, such
that $M_{0}\cup M_{1}=A$ and $M_{0}\cap M_{1}=\emptyset.$ If the maps
$f_{i}|_{A}:A\rightarrow A$ $(i=0,1)$ are invertible, then we define a
\textit{masked dynamical system} \textit{for} $\mathcal{F}$ to be
\[
W_{\mathcal{M}}:A\rightarrow A,\text{ }M_{i}\ni x\mapsto f_{i}^{-1}(x),\text{
}(i=0,1).
\]
It is proved in \cite[Theorem 4.3]{BHI} that, given a mask $\mathcal{M},$ if
the maps $f_{i}|_{A}:A\rightarrow A$ $(i=0,1)$ are invertible, we can define a
section for $\mathcal{F}$, that we call a \textit{masked section}
$\mathcal{\tau}_{\mathcal{M}}$ for $\mathcal{F}$, by using itineraries of
$W_{\mathcal{M}}$, as follows. Let $x\in A$ and let $\left\{  x_{n}\right\}
_{n=0}^{\infty}$ be the orbit of $x$ under $W_{\mathcal{M}}$; that is,
$x_{0}=x$ and $x_{n}=W_{\mathcal{M}}^{n}(x_{0})$ for $n=1,2,...$. Define
\begin{equation}
\mathcal{\tau}_{\mathcal{M}}(x)=\sigma_{0}\sigma_{1}\sigma_{2}...
\label{itineraryeq}%
\end{equation}
where $\sigma_{n}\in I$ is the unique symbol such that $x_{n}\in M_{\sigma
_{n}}$for all $n\in\mathbb{N}$.

Sections defined using itineraries of masked dynamical systems are shift invariant.

\begin{proposition}
\label{maskprop} Let the maps $f_{i}|_{A}:A\rightarrow A$ $($ $i=1,2)$ be invertible.

(i) Any mask $\mathcal{M}$ for $\mathcal{F}$ defines a shift-forward invariant
section, $\tau_{\mathcal{M}}:A\rightarrow I^{\infty}$, for $\mathcal{F}$.

(ii) Let $\Omega_{\mathcal{M}}=\tau_{\mathcal{M}}(A)$. The following diagram
commutes:%
\[%
\begin{array}
[c]{ccc}%
\Omega_{\mathcal{M}} & \overset{S|_{\Omega_{\mathcal{M}}}}{\rightarrow} &
\Omega_{\mathcal{M}}\\
\pi\downarrow\uparrow\tau_{\mathcal{M}} &  & \pi\downarrow\uparrow
\tau_{\mathcal{M}}\\
A & \underset{W_{\mathcal{M}}}{\rightarrow} & A
\end{array}
.
\]

(iii)\ Any section $\tau:A\rightarrow I^{\infty}$ for $\mathcal{F}$ defines a
mask $\mathcal{M}_{\tau}$ for $\mathcal{F}.$

(iv) If the section $\tau$ in (iii) is shift-forward invariant then $\tau
=\tau_{\mathcal{M}_{\tau}}.$
\end{proposition}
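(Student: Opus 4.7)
The plan is to handle the four claims by direct unpacking of definitions, with contractivity of $f_{0},f_{1}$ entering at just one place. I would proceed in the order (i), (ii), (iii), (iv).

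Part (i) splits into two assertions: that $\tau_{\mathcal{M}}$ is a section, and that its image $\Omega_{\mathcal{M}}$ is shift-forward invariant. For the first, I would read off from the definition $W_{\mathcal{M}}|_{M_{\sigma_{n}}}=f_{\sigma_{n}}^{-1}$ that $x_{n}=f_{\sigma_{n}}(x_{n+1})$ along the orbit, iterate to obtain $x=f_{\sigma_{0}|_{n}}(x_{n+1})$ for every $n$, and pass to the limit. Because the composition $f_{\sigma_{0}|_{n}}$ has contractivity tending to $0$ and $x_{n+1}$ stays in the bounded set $A$, this limit equals $\pi(\tau_{\mathcal{M}}(x))$ by the definition of the coding map, giving $\pi\circ\tau_{\mathcal{M}}=i_{A}$. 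Shift-forward invariance then falls out almost immediately: the orbit of $W_{\mathcal{M}}(x)=x_{1}$ is $x_{1},x_{2},\ldots$ with itinerary $\sigma_{1}\sigma_{2}\ldots$, so $\tau_{\mathcal{M}}\circ W_{\mathcal{M}}=S\circ\tau_{\mathcal{M}}$, whence $S(\Omega_{\mathcal{M}})\subseteq\Omega_{\mathcal{M}}$.

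Part (ii) is then book-keeping: the upward $\pi$-arrows commute with $\tau_{\mathcal{M}}$ by the section property from (i), the $W_{\mathcal{M}}$--$S$ square commutes by the intertwining established at the end of (i), and composing these with $\pi\circ\tau_{\mathcal{M}}=i_{A}$ yields $\pi\circ S=W_{\mathcal{M}}\circ\pi$ on $\Omega_{\mathcal{M}}$. For (iii) I would set $M_{i}:=\{x\in A:\tau(x)_{0}=i\}$; these are disjoint and cover $A$ since $\tau(x)_{0}\in\{0,1\}$ is uniquely defined, and the section identity $x=\pi(\tau(x))=f_{\tau(x)_{0}}(\pi(S\tau(x)))$ places $M_{i}$ inside $f_{i}(A)$.

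Part (iv) is where shift-forward invariance is crucial, and it is the step I would think through most carefully. From the identity just used, $W_{\mathcal{M}_{\tau}}(x)=f_{\tau(x)_{0}}^{-1}(x)=\pi(S\tau(x))$. Because $S\tau(x)\in\Omega$ by hypothesis, there exists $z\in A$ with $S\tau(x)=\tau(z)$; applying $\pi$ gives $z=W_{\mathcal{M}_{\tau}}(x)$, so $\tau(W_{\mathcal{M}_{\tau}}(x))=\tau(z)=S\tau(x)$, i.e.\ $\tau\circ W_{\mathcal{M}_{\tau}}=S\circ\tau$. Iterating this intertwining shows that the $n$-th symbol of $\tau(x)$ records which mask piece contains $W_{\mathcal{M}_{\tau}}^{n}(x)$, which is precisely the recipe defining $\tau_{\mathcal{M}_{\tau}}(x)$; hence $\tau=\tau_{\mathcal{M}_{\tau}}$.

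The only non-formal step is the convergence argument in (i): one must identify the orbit-based symbolic sequence with the limit-based coding map $\pi$. Everything else reduces to combining $\pi\circ\tau=i_{A}$ with the one-step recursion $\tau(x)=\tau(x)_{0}\cdot S\tau(x)$, and (iv) makes transparent why shift-forward invariance is the precise condition that allows a section to be recovered from its induced mask: without it, $S\tau(x)$ need not lie in $\Omega$, and the intertwining $\tau\circ W_{\mathcal{M}_{\tau}}=S\circ\tau$ breaks down.
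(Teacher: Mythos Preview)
Your proof is correct and follows essentially the same approach as the paper's: define the itinerary, verify it is a section via the coding-map limit, read off the intertwining $\tau_{\mathcal{M}}\circ W_{\mathcal{M}}=S\circ\tau_{\mathcal{M}}$, and for (iii)--(iv) set $M_i=\{x:\tau(x)_0=i\}$ and use the same intertwining in reverse. Your write-up is in fact more complete than the paper's, which defers the section property in (i) to a reference, states (iv) only as ``essentially the same as (ii)'', and omits the verification that $M_i\subseteq f_i(A)$ in (iii); you supply all of these explicitly.
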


\begin{proof}
(i) Compare with \cite[Theorem 4.3]{BHI}. If the maps are invertible, we can
use $\mathcal{M}$ to define an itinerary for each $x\in$ $A$, as in
(\ref{itineraryeq}), yielding a section $\tau_{\mathcal{M}}$ for $\mathcal{F}%
$. By construction, $\tau_{\mathcal{M}}$ is shift-forward invariant. (ii) We
show that $\tau_{\mathcal{M}}W_{\mathcal{M}}\pi\sigma=S\sigma$ for all
$\sigma\in\Omega_{\mathcal{M}}$. We have $\pi\sigma$ is a point $x\in A$ that
possesses address $\sigma\in\Omega_{\mathcal{M}}$. But $W_{\mathcal{M}}$ acts
by applying $f_{\sigma_{0}}^{-1}$ to $x=f_{\sigma_{0}}\circ f_{\sigma_{1}%
}\circ f_{\sigma_{2}}...$ yielding the point $W_{\mathcal{M}}\pi
\sigma=f_{\sigma_{1}}\circ f_{\sigma_{2}}...$ which tells us that $\sigma
_{1}\sigma_{2}\sigma_{3}..$ is \textit{an} address of $W_{\mathcal{M}}%
\pi\sigma$. But since $S\sigma\in\Omega_{\mathcal{M}}$ this address must be
the unique address in $\Omega_{\mathcal{M}}$ of $W_{\mathcal{M}}\pi\sigma.$ It
follows that $W_{\mathcal{M}}\pi\sigma=\sigma_{1}\sigma_{2}\sigma
_{3}..=S\sigma$. (iii) Given a section $\tau:A\rightarrow I^{\infty},$ we
define a mask $\mathcal{M}_{\tau}$ by $M_{i}=\{x\in A:\tau(x)_{0}=i\}(i=1,2).$
(iv) This is essentially the same as the proof of (ii).
\end{proof}

\subsection{\label{transformsec}Fractal transformations}

Let $\mathcal{G}=(\mathbb{Y};g_{1},g_{2})$ be a hyperbolic iterated function
system, with attractor $A_{\mathcal{G}}$ and coding map $\pi_{\mathcal{G}}$.
We refer to any mapping of the form
\[
\mathcal{T}_{\mathcal{FG}}:A\rightarrow A_{\mathcal{G}},x\mapsto
\pi_{\mathcal{G}}\circ\tau(x)\text{,}%
\]
where $\tau$ is a section of $\mathcal{F}$, as a \textit{fractal
transformation. }Later in this paper we construct and study fractal
transformations associated with certain overlapping iterated function systems,
such as those suggested by the left-hand panel in Figure \ref{fig-trans}. We
will use part (iv) of the following result to establish Theorem \ref{theorem1}.

\begin{proposition}
[ \cite{BHI}]\label{basicconjugacythm} Let $\tau:A\rightarrow I^{\infty}$ be a
section for $\mathcal{F}$ and let $\Omega=\tau(A)$ be an address space the
attractor $A$ of $\mathcal{F}$.

(i) If $\Omega$ is an address space for $\mathcal{G}$ then $\mathcal{T}%
_{\mathcal{FG}}:A\rightarrow A_{\mathcal{G}}$ is a bijection.

(ii) If, whenever $\sigma,\omega\in\overline{\Omega}$, $\pi(\sigma)=\pi
(\omega)\Rightarrow$ $\pi_{\mathcal{G}}(\sigma)=\pi_{\mathcal{G}}(\omega)$,
then $\mathcal{T}_{\mathcal{FG}}:A\rightarrow A_{\mathcal{G}}$ is continuous.

(iii) If, whenever $\sigma,\omega\in\overline{\Omega}$, $\pi(\sigma
)=\pi(\omega)\Leftrightarrow$ $\pi_{\mathcal{G}}(\sigma)=\pi_{\mathcal{G}%
}(\omega)$, then $\mathcal{T}_{\mathcal{FG}}:A\rightarrow A_{\mathcal{G}}$ is
a homeomorphism.

(iv) If $\tau$ is a masked section of $\mathcal{F}$ such that the condition in
(iii) holds then the corresponding pair of masked dynamical systems,
$W_{\mathcal{M}}:A\rightarrow A$ and, say, $W_{\mathcal{M}_{\mathcal{G}}%
}:A_{\mathcal{G}}\rightarrow A_{\mathcal{G}}$ are topologically conjugate.
\end{proposition}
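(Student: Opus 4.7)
For part (i), the plan is to observe that any section $\tau$ makes $\pi|_\Omega:\Omega\to A$ a bijection with inverse $\tau$. The hypothesis that $\Omega$ is also an address space for $\mathcal{G}$ means there is a section $\tau_\mathcal{G}:A_\mathcal{G}\to I^\infty$ with image $\Omega$, so $\pi_\mathcal{G}|_\Omega:\Omega\to A_\mathcal{G}$ is also a bijection. Then $\mathcal{T}_{\mathcal{FG}}=\pi_\mathcal{G}|_\Omega\circ(\pi|_\Omega)^{-1}$ is a composition of bijections.

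For part (ii), I will argue sequentially. Suppose $x_n\to x$ in $A$. Since $\overline{\Omega}\subseteq I^\infty$ is compact, every subsequence of $\{\tau(x_n)\}$ has a further subsequence converging to some $\omega\in\overline{\Omega}$. Continuity of $\pi$ forces $\pi(\omega)=\lim x_n=x=\pi(\tau(x))$, so the hypothesis gives $\pi_\mathcal{G}(\omega)=\pi_\mathcal{G}(\tau(x))=\mathcal{T}_{\mathcal{FG}}(x)$. Continuity of $\pi_\mathcal{G}$ then yields $\mathcal{T}_{\mathcal{FG}}(x_{n_k})\to\mathcal{T}_{\mathcal{FG}}(x)$ along the subsequence; since every subsequence has the same limit, the whole sequence converges. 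For part (iii), the bi-implication hypothesis lets me run the same argument for $\mathcal{T}_{\mathcal{FG}}$ and for a candidate inverse built from $\pi\circ\pi_\mathcal{G}^{-1}$ on $\pi_\mathcal{G}(\overline{\Omega})$: the $\Rightarrow$ direction of the hypothesis gives continuity of $\mathcal{T}_{\mathcal{FG}}$ via (ii), and the $\Leftarrow$ direction gives continuity of the inverse and simultaneously implies injectivity of $\pi_\mathcal{G}|_\Omega$ (two elements of $\Omega$ with the same $\pi_\mathcal{G}$-image would have the same $\pi$-image, hence coincide since $\pi|_\Omega$ is injective). Surjectivity of $\mathcal{T}_{\mathcal{FG}}$ onto $A_\mathcal{G}$ will follow because the continuous image $\mathcal{T}_{\mathcal{FG}}(A)=\pi_\mathcal{G}(\Omega)$ is compact and contains $\pi_\mathcal{G}(\overline{\Omega})$ (using continuity of $\pi_\mathcal{G}$ and the hypothesis that $\Omega$ is an address space for $\mathcal{G}$, which is the implicit companion assumption inherited from (i)); alternatively I will simply appeal to the standard fact that a continuous bijection from a compact space to a Hausdorff space is a homeomorphism.

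For part (iv), let $\tau=\tau_\mathcal{M}$ and set $\mathcal{M}_\mathcal{G}$ to be the mask associated, via Proposition~\ref{maskprop}(iii), to the section $\tau_\mathcal{M}\circ\mathcal{T}_{\mathcal{FG}}^{-1}$ of $\mathcal{G}$. By Proposition~\ref{maskprop}(iv) this section coincides with $\tau_{\mathcal{M}_\mathcal{G}}$. Applying Proposition~\ref{maskprop}(ii) in both systems gives $\tau_\mathcal{M}\circ W_\mathcal{M}=S\circ\tau_\mathcal{M}$ and $\tau_{\mathcal{M}_\mathcal{G}}\circ W_{\mathcal{M}_\mathcal{G}}=S\circ\tau_{\mathcal{M}_\mathcal{G}}$. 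Substituting $\tau_{\mathcal{M}_\mathcal{G}}=\tau_\mathcal{M}\circ\mathcal{T}_{\mathcal{FG}}^{-1}$ in the second identity and applying $\pi$ (which inverts $\tau_\mathcal{M}$ on its image) produces $W_{\mathcal{M}_\mathcal{G}}\circ\mathcal{T}_{\mathcal{FG}}=\mathcal{T}_{\mathcal{FG}}\circ W_\mathcal{M}$. Combined with the homeomorphism property from (iii), this is precisely the asserted topological conjugacy.

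The main obstacle I anticipate is verifying surjectivity and continuity of the inverse in (iii) cleanly without circularity: the two-sided hypothesis must be leveraged so that the constructed inverse is well defined on all of $A_\mathcal{G}$, and care is needed to ensure the set $\pi_\mathcal{G}(\overline{\Omega})$ on which the inverse is naturally defined actually equals $A_\mathcal{G}$. The compactness-plus-Hausdorff shortcut neatly bypasses this once the bijection is in hand, so the bulk of the work lies in establishing bijectivity from the bi-implication.
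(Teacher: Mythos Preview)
Your proposal is correct and substantially more detailed than the paper's own proof, which is essentially a citation: the paper says (i) ``follows at once,'' (ii) and (iii) ``are proved in \cite[Theorem 3.4]{BHI},'' and (iv) ``is immediate, based on the definition of $W_{\mathcal{M}_{\mathcal{G}}}$.'' Your sequential/compactness argument for (ii) is exactly the standard one and is almost certainly what lies behind the reference to \cite{BHI}; your treatment of (iv) via Proposition~\ref{maskprop} spells out precisely what the paper means by ``immediate.''

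One remark on the surjectivity issue you flag in (iii): you are right that the bi-implication alone does not force $\pi_{\mathcal{G}}(\Omega)=A_{\mathcal{G}}$, and the paper's statement is tacitly cumulative---the discussion immediately following the proposition explicitly says ``Since $\Omega$ is an address space for the attractor $A_{\mathcal{G}}$ of $\mathcal{G}$\ldots,'' so the hypothesis of (i) is being carried through to (iii) and (iv). With that reading your argument goes through cleanly: injectivity of $\pi_{\mathcal{G}}|_{\Omega}$ follows from the $\Leftarrow$ direction as you describe, surjectivity is given by the address-space assumption, and the compact-to-Hausdorff shortcut handles continuity of the inverse. Your alternative route of building the inverse directly and running the (ii) argument symmetrically also works and is in fact closer to how such results are typically proved in the fractal-transformation literature.
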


Here $W_{\mathcal{M}_{\mathcal{G}}}$ is defined in the obvious way, as
follows. Since $\Omega$ is an address space for the attractor $A_{\mathcal{G}%
}$ of $\mathcal{G}$ and it is also shift-forward invariant -since it is a
masked address space for $\mathcal{F}$- it defines a shift-forward invariant
section $\mathcal{\tau}_{\mathcal{G}}$ for $\mathcal{G}$, so by Proposition
\ref{maskprop}(iii), it defines a mask $\mathcal{M}_{\mathcal{G}}$ for
$\mathcal{G}$ such that $\mathcal{\tau}_{\mathcal{G}}=\mathcal{\tau
}_{\mathcal{M}_{\mathcal{G}}}$; we use this latter mask to define the masked
dynamical system $W_{\mathcal{M}_{\mathcal{G}}}:A_{\mathcal{G}}\rightarrow
A_{\mathcal{G}}$.

\begin{proof}
(i) follows at once from the fact that $\Omega$ is a section for both
$\mathcal{F}$ and $\mathcal{G}$. (ii) and (iii) are proved in \cite[Theorem
3.4]{BHI}. (iv) is immediate, based on the definition of $W_{\mathcal{M}%
_{\mathcal{G}}}:A_{\mathcal{G}}\rightarrow A_{\mathcal{G}}$.
\end{proof}

\begin{remark}
All of the results in Section \ref{backgroundsec} apply to any hyperbolic
iterated function systems of the form $\mathcal{F}=(\mathbb{X};f_{1}%
,f_{2},...,f_{N})$ where $N$ is an arbitrary finite positive integer.
\end{remark}

\section{\label{overlapifssec}Overlapping iterated function systems of two
monotone increasing interval maps}

\subsection{\label{generalstructuresec}General structure}

Here we consider iterated function systems, related to $W$ as introduced at
the start of Section \ref{introsec}, that involve overlapping monotone
increasing interval maps. We introduce two families of masks and characterize
the associated sections and address spaces.

Let $\mathbb{X}$ be $[0,1]\subset\mathbb{R}$ with the Euclidean metric. Let
$0<\lambda<1.$ Let
\begin{equation}
\mathcal{F}=([0,1]\subset\mathbb{R};f_{0}(x),f_{1}(x))\label{ifsequation}%
\end{equation}
where $f_{i}(i)=i,0<f_{i}(y)-f_{i}(x)<\lambda(y-x)$ for all $x<y$, $(i=0,1).$
Both maps are monotone strictly increasing contractions. We also require
$f_{0}(1)=a\geq1-b=f_{1}(0)$ with $0<a,b<1$. See Figure \ref{fig-trans}. The
attractor of $\mathcal{F}$ is $A=[0,1]=f_{0}([0,1])\cup f_{1}([0,1])$ and the
coding map is $\pi:I^{\infty}\rightarrow\lbrack0,1]$.%

\begin{figure}[ptb]%
\centering
\includegraphics[
height=2.6844in,
width=5.5486in
]%
{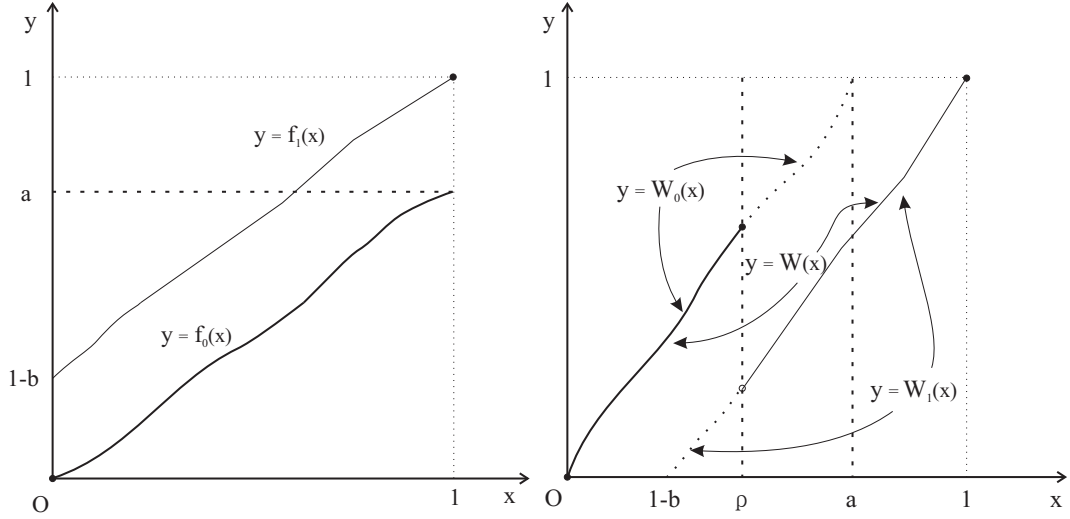}%
\caption{Left: the graphs of two functions that comprise an iterated function
system $\mathcal{F}$, as described in the text. Right: the graph of a masked
dynamical system for $\mathcal{F}$.}%
\label{fig-trans}%
\end{figure}

We define a one parameter family of masks for $\mathcal{F}$,%
\[
\{\mathcal{M}_{\rho}:0<1-b\leq\rho\leq a<1\},
\]
by $M_{0}=[0,\rho],$ $M_{1}=(\rho,1]$. The corresponding masked dynamical
system is
\begin{equation}
W:[0,1]\rightarrow\lbrack0,1]\ni x\mapsto\left\{
\begin{array}
[c]{c}%
f_{0}^{-1}(x)\text{ if }x\in\lbrack0,\rho],\\
f_{1}^{-1}(x)\text{ otherwise},
\end{array}
\right.  \label{Wequation}%
\end{equation}
as graphed in Figure \ref{fig-trans}.

The masked section for $\mathcal{F}$ is $\tau=\tau(\rho),$ and the masked code
space is $\Omega=\Omega(\rho)$. The dependence on $\rho$ is implicit except
where we need to draw attention to it. For convenient reference we note that,
for all $x\in\lbrack0,1]$,%
\[
\tau(x)=\sigma_{0}\sigma_{1}\sigma_{2}...\in I^{\infty}\text{ \textit{where}
}\sigma_{k}=\left\{
\begin{array}
[c]{c}%
0\text{ \textit{if} }W^{k}(x)\in\lbrack0,\rho],\\
1\text{ \textit{otherwise}.}%
\end{array}
\right.
\]
For example $\tau(0)=\overline{0}$ and $\tau(1)=\overline{1}$.

We need to understand the structure of $\overline{\Omega}$ because we will use
Proposition \ref{basicconjugacythm} to prove Theorem \ref{theorem1}. Since
$\Omega\subset I^{\infty}$ is totally disconnected and $A=[0,1]$ is connected,
it follows that $\tau:A\rightarrow\Omega$ is not a homeomorphism and hence
that $\overline{\Omega}\neq\Omega$. (Note that if $\overline{\Omega}=\Omega$
then $\tau:A\rightarrow\Omega$ is a homeomorphism, \cite[Theorem 3.2 (v)]{BHI}.)

To help to describe $\overline{\Omega}$ we introduce the mask $\mathcal{M}%
_{\rho}^{+}=$ $\{M_{0}^{+}=[0,\rho)$,$M_{1}^{+}=[\rho,1]\}$ for $\mathcal{F}$.
Let $\Omega_{+}$be the address space associated with the mask $\mathcal{M}%
_{\rho}^{+}$, and let $\tau^{+}:[0,1]\rightarrow\Omega_{+}$ be the
corresponding section. The corresponding masked dynamical system $W_{+}$ is
obtained by replacing $[0,\rho]$ by $[0,\rho)\ $in (\ref{Wequation}).

Proposition \ref{mihalachethm} is a summary of some results in
\cite{mihalache}, that concern the spaces $\Omega,\Omega_{+},\overline{\Omega
}$ and the sections $\tau$ and $\tau^{+}$. In particular, it describes the
monotonicity of $\tau$ and the subtle relationship between $\tau$ and
$\tau^{+}$, and it provides a characterization of $\overline{\Omega}$ in terms
of two itineraries.

\begin{proposition}
[\cite{mihalache}]\label{mihalachethm} For all $\rho\in\lbrack1-b,a],$

(i) $\Omega$ is closed from the left, $\Omega_{+}$ is closed from the right,
and
\[
\overline{\Omega}=\overline{\Omega_{+}}=\Omega\cup\Omega_{+}=\overline
{\Omega\cap\Omega_{+}};
\]

(ii) $\mathcal{\pi}^{-1}(x)\cap\overline{\Omega}=\{\tau(x),\tau^{+}(x)\}$ for
all $x\in$ $[0,1]$;

(iii) for all $x,y\in\lbrack0,1]$ with $x<y$,
\[
\tau(x)\preceq\tau^{+}(x)\prec\tau(y)\preceq\tau^{+}(y);
\]

(iv) for all $x\in\lbrack0,1],$ $\tau(x)\neq\tau^{+}(x)$ if only if $x\in
W^{-k}(\rho)$ for some $k\mathbb{\in N}$;

(v) $\tau(W(x))=S(\tau(x))$ and $\tau^{+}(W_{+}(x))=S(\tau^{+}(x))$ for all
$x\in\lbrack0,1]$;

(vi) $S(\Omega)=\Omega;$ $S(\Omega_{+})=\Omega_{+};$ $S(\overline{\Omega
})=\overline{\Omega};$

(vii) let $\tau(\rho)=\alpha$ and $\tau^{+}(\rho)=\beta$,%
\begin{align*}
\Omega &  =\{\sigma\in I^{\infty}:S^{k}(\sigma)\in\lbrack\overline{0}%
,\alpha]\cup(\beta,\overline{1}]\text{ for all }k\mathbb{\in N\}}\text{;}\\
\Omega_{+}  &  =\{\sigma\in I^{\infty}:S^{k}(\sigma)\in\lbrack\overline
{0},\alpha)\cup\lbrack\beta,\overline{1}]\text{ for all }k\mathbb{\in
N\}}\text{;}\\
\overline{\Omega}  &  =\{\sigma\in I^{\infty}:S^{k}(\sigma)\in\lbrack
\overline{0},\alpha]\cup\lbrack\beta,\overline{1}]\text{ for all }k\mathbb{\in
N\}}.
\end{align*}

\end{proposition}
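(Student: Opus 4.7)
I would handle (v) and (iv) directly from the definitions. For (v), $\tau(x)_k$ records on which side of $\rho$ the point $W^k(x)$ lies, so $\tau(W(x))_k=\tau(x)_{k+1}=(S\tau(x))_k$, and the identical reasoning works for $\tau^{+}$. For (iv), $W$ and $W_{+}$ agree on $[0,1]\setminus\{\rho\}$, so an induction shows $W^j(x)=W_{+}^j(x)$ for all $j$ whenever no iterate of $x$ equals $\rho$, whence $\tau(x)=\tau^{+}(x)$; conversely, at the first $k$ with $W^k(x)=\rho$ the itineraries must disagree, since $\tau$ sends $\rho$ to $0$ while $\tau^{+}$ sends it to $1$.

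The central step is (iii). Given $x<y$ I would compare the orbits of $x$ under $W_{+}$ with those of $y$ under $W$. The itineraries $\tau^{+}(x),\tau(y)$ disagree (by injectivity of $\pi\circ\tau^{+}=\pi\circ\tau=\mathrm{id}$); let $k$ be the first coordinate of disagreement. An induction on $j\le k$ shows $W_{+}^j(x)<W^j(y)$: for $j<k$ the two iterates lie on the same side of $\rho$ (with the $\tau^{+}$ convention on the $x$-orbit and the $\tau$ convention on the $y$-orbit), and on each side $W$ and $W_{+}$ coincide with the same strictly increasing branch $f_0^{-1}$ or $f_1^{-1}$, so strict order propagates. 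At $j=k$ the iterates lie on opposite sides of $\rho$, and strict $W_{+}^k(x)<W^k(y)$ is compatible only with $\tau^{+}(x)_k=0,\tau(y)_k=1$; hence $\tau^{+}(x)\prec\tau(y)$. The inequality $\tau(x)\preceq\tau^{+}(x)$ is immediate from (iv): any disagreement between $\tau$ and $\tau^{+}$ first occurs at a coordinate where $W^k(x)=\rho$, yielding $\tau(x)_k=0<1=\tau^{+}(x)_k$.

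I would then do (vii): the forward direction follows from (iii), since $\tau([0,\rho])\subseteq[\overline{0},\alpha]$, $\tau((\rho,1])\subseteq(\beta,\overline{1}]$, and shift equivariance gives $S^k\tau(x)\in[\overline{0},\alpha]\cup(\beta,\overline{1}]$. For the converse, given $\sigma$ satisfying the shift condition, set $x=\pi(\sigma)$ and verify $\tau(x)=\sigma$ coordinate by coordinate, using weak monotonicity of $\pi$ to read off $\tau(x)_k$ from which of the two pieces contains $S^k\sigma$. Parts (i), (ii), (vi) then fall out. For (i) the left-continuity of $\tau$ (symmetrically right-continuity of $\tau^{+}$) comes from monotone convergence in the product topology combined with (vii) to identify the limit; the identities $\overline{\Omega}=\Omega\cup\Omega_{+}=\overline{\Omega\cap\Omega_{+}}$ follow by approximating $\tau(x),\tau^{+}(x)$ from the correct side using points $y_n$ outside the countable exceptional set $\bigcup_k W^{-k}(\rho)$, on which $\tau(y_n)=\tau^{+}(y_n)\in\Omega\cap\Omega_{+}$. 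Part (ii) is then immediate from injectivity of each section. Part (vi) requires only surjectivity of $W$, which follows from $W([0,\rho])\cup W((\rho,1])=[0,f_0^{-1}(\rho)]\cup(f_1^{-1}(\rho),1]=[0,1]$, a consequence of the overlap hypothesis $1-b\le\rho\le a$ together with $f_0\le f_1$ (itself implied by the contraction conditions and the endpoint values).

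The main obstacle I expect is the converse in (vii): in the overlapping regime a single point $x$ has many $\pi$-preimages, and the shift condition must be shown to pick out precisely the distinguished address $\tau(x)$. Some care is needed at boundary coordinates where $S^k\sigma$ equals $\alpha$ or lies just above $\beta$, and reconciling the strict lexicographic order on $I^{\infty}$ with the merely weakly monotone coding map $\pi$ may require leveraging the finer description of $\pi^{-1}(\rho)$ developed in \cite{mihalache}.
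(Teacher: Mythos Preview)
Your proposal supplies an actual argument where the paper gives none: the paper's own proof consists entirely of pointers to the companion reference \cite{mihalache} (Propositions~1--3 and equations~(5),(6) there), so you are doing strictly more work. Your treatment of (iii), (iv), (v) is correct and is essentially the argument one would reconstruct from \cite{mihalache}; in particular the induction in (iii), comparing the $W_{+}$--orbit of $x$ with the $W$--orbit of $y$ branch by branch, is the right idea and is cleanly executed. The derivations of (i), (ii), (vi) from (vii) are also along the right lines.

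The gap you flag in the converse of (vii) is real and is the genuine technical core. The issue is precisely the one you name: for $\sigma$ satisfying the shift condition and $x=\pi(\sigma)$, one can show inductively that $\pi(S^{j}\sigma)=W^{j}(x)$ as long as the itinerary symbols match; the case $\sigma_{j}=0$ forces $S^{j}\sigma\preceq\alpha$ and hence $\pi(S^{j}\sigma)\le\rho$ by weak monotonicity of $\pi$, which is fine. But in the case $\sigma_{j}=1$ one only gets $S^{j}\sigma\succ\beta$, and weak monotonicity of $\pi$ does not by itself exclude $\pi(S^{j}\sigma)=\rho$. Closing this requires knowing that $\beta$ is the \emph{minimal} address of $\rho$ beginning with $1$ (equivalently, that no $\omega\succ\beta$ with $\omega_{0}=1$ and $S^{m}\omega\in[\overline{0},\alpha]\cup(\beta,\overline{1}]$ for all $m$ can satisfy $\pi(\omega)\le\rho$). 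This is exactly what \cite[Proposition~3]{mihalache} supplies, and it uses the trapping-region inequalities $S\beta\prec\alpha\prec\beta\prec S\alpha$ in an essential way; your sketch does not yet contain this step. Once that is in hand, your approach and the paper's (via \cite{mihalache}) converge.
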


\begin{proof}
Proof of (i): This is \cite[Proposition 2]{mihalache}.

Proof of (ii): From the definition of the section $\tau$ we have
$\tau(x)=\mathcal{\pi}^{-1}(x)\cap\Omega$. Similarly $\tau^{+}(x)=\mathcal{\pi
}^{-1}(x)\cap\Omega_{+}.$ The result now follows at once from (i)

Proof of (iii) and (iv): These are equivalent to \cite[section 2, (5) and
(6)]{mihalache}.

Proof of (v) and (vi): These are the content of \cite[Proposition
1]{mihalache}$.$

Proof of (vii): This follows from \cite[Proposition 3]{mihalache}.
\end{proof}

Let $\mathfrak{F}$ denote the set of all iterated function systems of the form
of $\mathcal{F}$ described above, at the start of Section
\ref{generalstructuresec}. Let$\ \widetilde{\mathcal{F}}\in\mathfrak{F}$, and
let corresponding quantities be denoted by tildas. That is, let
\[
\widetilde{\mathcal{F}}=([0,1]\subset\mathbb{R};\widetilde{f}_{0}%
(x),\widetilde{f}_{1}(x))
\]
where $\widetilde{f}_{i}(i)=i,0<\widetilde{f}_{i}(y)-\widetilde{f}%
_{i}(x)<\widetilde{\lambda}(y-x)$ for all $x<y$, $(i=0,1)\ $where
$\widetilde{f}_{0}(1)=\widetilde{a}\geq1-\widetilde{b}=\widetilde{f}_{1}(0)$
with $0<\widetilde{a},\widetilde{b}<1$. Let $\mathcal{M}_{\widetilde{\rho}%
}=\{[0,\widetilde{\rho}],(\widetilde{\rho},1]\}$ for $\widetilde{\rho}%
\in\lbrack1-\widetilde{b},\widetilde{a}]$ be a family of masks for
$\widetilde{\mathcal{F}},$ analogous to the masks $\mathcal{M}_{\rho}$ for
$\mathcal{F}$, and let $\widetilde{\tau}$ and $\widetilde{\tau}^{+}$ be the
corresponding sections for $\widetilde{\mathcal{F}}$, analogous to the
sections $\tau$ and $\tau^{+}$ for $\mathcal{F}$. Let $\widetilde{W}%
:[0,1]\rightarrow\lbrack0,1]$ denote the masked dynamical system for
$\widetilde{\mathcal{F}}$ corresponding to the mask $\mathcal{M}%
_{\widetilde{\rho}}$.

\begin{corollary}
\label{corollary1}The following statements are equivalent:

(i) the fractal transformation $T_{\mathcal{F}\widetilde{\mathcal{F}}%
}=\widetilde{\pi}\circ\tau:[0,1]\rightarrow\lbrack0,1]$ is an orientation
preserving homeomorphism;

(ii) $\widetilde{\tau}(\widetilde{\rho})=\tau(\rho)$ and $\widetilde{\tau}%
^{+}(\widetilde{\rho})=\tau^{+}(\rho);$

(iii) the masked dynamical systems $W:[0,1]\rightarrow\lbrack0,1]$ and
$\widetilde{W}:[0,1]\rightarrow\lbrack0,1]$ are topologically conjugate under
an orientation preserving homeomorphism.

\begin{proof}
Proof that (iii)$\Rightarrow$(ii): Let the homeomophism be $H:[0,1]\rightarrow
\lbrack0,1]$, such that $H(1)=1,$ so that $W=H^{-1}\widetilde{W}H$. Both
systems have the same set of itineraries and $x=H(\rho)=\widetilde{\rho}$ is
the location of the discontinuity of $\widetilde{W}$. The two itineraries
associated with the discontinuity must be the same for both systems, and in
the same order, because the homeomorphism is order preserving.

Proof that (ii)$\Rightarrow$(iii): By Proposition \ref{mihalachethm} (vii) the
closure of the address spaces for the two systems is the same, namely
$\overline{\Omega}$. We are going to use Proposition \ref{basicconjugacythm}
(iv), so we need to check the condition in Proposition \ref{basicconjugacythm}
(iii). Suppose that $\sigma,\omega\in$ $\overline{\Omega}$ and suppose that
$\pi(\sigma)=\pi(\omega)$. We need to show that $\widetilde{\pi}%
(\sigma)=\widetilde{\pi}(\omega)$. Since by Proposition
\ref{basicconjugacythm} $\overline{\Omega}=\Omega\cup\Omega_{+}$ we must have
either $\sigma,\omega\in\Omega$ or $\sigma,\omega\in\Omega_{+}$ or, without
loss of generality, $\sigma\in\Omega$ and $\omega\in\Omega_{+}$. If
$\sigma,\omega\in\Omega$ then $\pi(\sigma)=\pi(\omega)$ implies $\sigma
=\tau\circ\pi(\sigma)=\tau\circ\pi(\omega)=\omega,$ so $\sigma=\omega$, whence
$\widetilde{\pi}(\sigma)=$ $\widetilde{\pi}(\omega).$ Similarly, if
$\sigma,\omega\in\Omega_{+}$ then also $\widetilde{\pi}(\sigma)=$
$\widetilde{\pi}(\omega)$, but this time use $\tau_{+}$.

Now suppose $\sigma\in\Omega$, $\omega\in\Omega_{+},$ and $\pi(\sigma
)=\pi(\omega)$. Then we have $\widetilde{\pi}(\sigma)=$ $\widetilde{\pi}%
\circ\tau\circ\pi(\sigma)$ and $\widetilde{\pi}(\omega)=\widetilde{\pi}%
\circ\tau_{+}\circ\pi(\omega)=$ $\widetilde{\pi}\circ\tau_{+}\circ\pi(\sigma
)$. Again, if $\tau\circ\pi(\sigma)=\tau_{+}\circ\pi(\omega)(=\tau_{+}\circ
\pi(\sigma))$ we have $\widetilde{\pi}(\sigma)=$ $\widetilde{\pi}(\omega)$. So
we suppose $\tau\circ\pi(\sigma)\neq\tau_{+}\circ\pi(\omega)(=\tau_{+}\circ
\pi(\sigma)).$ But by Proposition \ref{mihalachethm}, $\tau\circ\pi
(\sigma)\neq\tau_{+}\circ\pi(\sigma)$ iff $\pi(\sigma)(=\pi(\omega))\in
W^{-k}(\rho)$. But $\pi(\sigma)\in W^{-k}(\rho)$ implies $\tau\circ\pi
(\sigma)=\sigma_{0}\sigma_{1}...\sigma_{k-1}\tau(\rho)$ for some sigmas and
$\tau_{+}\circ\pi(\sigma)=\tau_{+}\circ\pi(\omega)=\sigma_{0}\sigma
_{1}...\sigma_{k-1}\tau_{+}(\rho)$ whence $\widetilde{\pi}(\omega
)=\widetilde{\pi}\circ\tau_{+}\circ\pi(\omega)=\widetilde{f}_{\sigma_{0}%
\sigma_{1}...\sigma_{k-1}}\circ\widetilde{\pi}\circ\tau_{+}(\rho)$ and
$\widetilde{\pi}(\sigma)=\widetilde{\pi}\circ\tau\circ\pi(\sigma
)=\widetilde{f}_{\sigma_{0}\sigma_{1}...\sigma_{k-1}}\circ\widetilde{\pi}%
\circ\tau(\rho);$ but by (ii)\ these latter two quantities are the same. Hence
the condition in \ref{basicconjugacythm} (iv) is satisfied.

The other direction of the last part here is essentially the same, but swap
the roles of tildas with non-tildas.

Proof that (i) and (iii) are equivalent is similar to the above, again using
Proposition \ref{basicconjugacythm}.
\end{proof}
\end{corollary}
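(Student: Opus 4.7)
I would establish the cycle
$(\mathrm{iii})\Rightarrow(\mathrm{ii})\Rightarrow(\mathrm{i})\Rightarrow(\mathrm{iii})$,
with the middle implication doing the real work via Proposition
\ref{basicconjugacythm}.

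First, for (iii)$\Rightarrow$(ii), assume $H:[0,1]\to[0,1]$ is an
orientation preserving homeomorphism with $\widetilde W\circ H=H\circ W$.
The map $W$ has a unique jump discontinuity at $\rho$ and is continuous
elsewhere; since $H$ is a homeomorphism and $\widetilde W$ has its unique
jump discontinuity at $\widetilde\rho$, we must have $H(\rho)=\widetilde\rho$.
Because $H$ is orientation preserving, it sends $[0,\rho]$ onto
$[0,\widetilde\rho]$ and $(\rho,1]$ onto $(\widetilde\rho,1]$, so the
symbol assigned to $W^k(\rho)$ by $\tau$ agrees with the symbol assigned
to $\widetilde W^k(\widetilde\rho)$ by $\widetilde\tau$ for every $k$;
hence $\tau(\rho)=\widetilde\tau(\widetilde\rho)$, and the same argument
applied to one-sided limits gives $\tau^{+}(\rho)=\widetilde\tau^{+}(\widetilde\rho)$.

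Next, for (ii)$\Rightarrow$(i), I apply Proposition
\ref{basicconjugacythm}(iii) to $\mathcal F$, $\widetilde{\mathcal F}$, and
the masked section $\tau$. By Proposition \ref{mihalachethm}(vii) the
closed address space $\overline\Omega$ is determined solely by
$\alpha=\tau(\rho)$ and $\beta=\tau^{+}(\rho)$, so (ii) forces
$\overline\Omega=\overline{\widetilde\Omega}$, and in particular
$\overline\Omega$ is an address space for $\widetilde{\mathcal F}$ too.
I then verify the biconditional
$\pi(\sigma)=\pi(\omega)\Leftrightarrow\widetilde\pi(\sigma)=\widetilde\pi(\omega)$
for $\sigma,\omega\in\overline\Omega$ by splitting, via Proposition
\ref{mihalachethm}(i), into the cases $\sigma,\omega\in\Omega$,
$\sigma,\omega\in\Omega_{+}$, or one in each. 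The first two reduce to the
injectivity of $\tau$ and $\tau^{+}$ on $A$. In the mixed case, Proposition
\ref{mihalachethm}(ii),(iv) tells me $\pi(\sigma)=\pi(\omega)\in W^{-k}(\rho)$
for some $k$, so $\sigma=\sigma_0\cdots\sigma_{k-1}\tau(\rho)$ and
$\omega=\sigma_0\cdots\sigma_{k-1}\tau^{+}(\rho)$; applying $\widetilde\pi$
and using (ii) shows both images equal
$\widetilde f_{\sigma_0\cdots\sigma_{k-1}}(\widetilde\rho)$. Proposition
\ref{basicconjugacythm}(iii) then gives that
$T_{\mathcal F\widetilde{\mathcal F}}$ is a homeomorphism, and
orientation preservation follows from the monotonicity of $\tau$
(Proposition \ref{mihalachethm}(iii)) together with the monotonicity of
$\widetilde\pi$ on $\overline\Omega$.

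Finally, (i)$\Rightarrow$(iii) is immediate from Proposition
\ref{basicconjugacythm}(iv): the biconditional just established lets that
result conclude $W$ and $\widetilde W$ are topologically conjugate, and
the conjugating map is $T_{\mathcal F\widetilde{\mathcal F}}$ itself,
which is orientation preserving by the previous step. The main obstacle
is the mixed case in the biconditional, where one must pin down the
common index $k$ and the specific prefix $\sigma_0\cdots\sigma_{k-1}$ via
Proposition \ref{mihalachethm}(iv); this is the only place where the
exact data in hypothesis (ii), namely the matching of \emph{both} the
left and the right itineraries of the discontinuity, is used in an
essential way.
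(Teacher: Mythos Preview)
Your argument for $(\mathrm{iii})\Rightarrow(\mathrm{ii})$ and for $(\mathrm{ii})\Rightarrow(\mathrm{i})$ is essentially the paper's own: the paper proves $(\mathrm{iii})\Rightarrow(\mathrm{ii})$ by the same discontinuity/itinerary argument, and proves $(\mathrm{ii})\Rightarrow(\mathrm{iii})$ by exactly the case split on $\overline{\Omega}=\Omega\cup\Omega_{+}$ that you describe, invoking Proposition~\ref{mihalachethm}(iv) to pin down the prefix $\sigma_0\cdots\sigma_{k-1}$ in the mixed case and then applying Proposition~\ref{basicconjugacythm}. Your choice to land on $(\mathrm{i})$ rather than $(\mathrm{iii})$ from the biconditional is a cosmetic reorganisation; the content is the same.

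There is, however, a logical gap in your final step. You claim $(\mathrm{i})\Rightarrow(\mathrm{iii})$ ``is immediate from Proposition~\ref{basicconjugacythm}(iv): the biconditional just established\ldots''. But that biconditional was established \emph{under hypothesis $(\mathrm{ii})$}, in the course of proving $(\mathrm{ii})\Rightarrow(\mathrm{i})$. In a cycle $(\mathrm{iii})\Rightarrow(\mathrm{ii})\Rightarrow(\mathrm{i})\Rightarrow(\mathrm{iii})$, when you arrive at the last arrow you may assume only $(\mathrm{i})$; you are not entitled to reuse work done under $(\mathrm{ii})$. What you have actually shown is $(\mathrm{iii})\Rightarrow(\mathrm{ii})$, $(\mathrm{ii})\Rightarrow(\mathrm{i})$, and (implicitly, via the same biconditional and Proposition~\ref{basicconjugacythm}(iv)) $(\mathrm{ii})\Rightarrow(\mathrm{iii})$. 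That gives $(\mathrm{ii})\Leftrightarrow(\mathrm{iii})$ and $(\mathrm{ii})\Rightarrow(\mathrm{i})$, but no implication out of $(\mathrm{i})$.

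To close the cycle you need an independent argument starting from $(\mathrm{i})$ alone. One route: if $T=\widetilde{\pi}\circ\tau$ is an orientation preserving homeomorphism then $\widetilde{\pi}|_{\Omega}$ is a bijection onto $[0,1]$, so $\Omega$ is a shift-invariant address space for $\widetilde{\mathcal F}$; by Proposition~\ref{maskprop}(iii),(iv) it arises from a mask $\{[0,T(\rho)],(T(\rho),1]\}$, and one then argues that $T(\rho)=\widetilde\rho$ and hence $\widetilde\tau(\widetilde\rho)=\tau(\rho)$, recovering $(\mathrm{ii})$. The paper itself is brief here (``Proof that (i) and (iii) are equivalent is similar to the above''), so your level of detail is comparable, but as written your cycle does not close.
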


We conclude this section by outlining a direct proof of Theorem
\ref{directtheorem}. Let $\mathcal{F}$ and $\mathcal{G}$ are two overlapping
IFSs, as discussed in this paper, and let corresponding masked dynamical
systems (see Figure \ref{figaug17}) have address spaces $\Omega_{\mathcal{F}}$
and $\Omega_{\mathcal{G}},$ respectively. Here $\tau_{\mathcal{F}%
}:[0,1]\rightarrow\Omega_{\mathcal{F}}$ is the masked section for
$\mathcal{F}$, equal to the mapping from $x$ to the itinerary of $x,$ and
$\pi_{\mathcal{F}}:\Omega_{\mathcal{F}}\rightarrow\lbrack0,1]$ is the
(continuous, onto, restriction to $\Omega_{\mathcal{F}}$ of the) coding map
for $\mathcal{F}.$ Here too, $\tau_{\mathcal{G}}:[0,1]\rightarrow
\Omega_{\mathcal{G}}$ is the masked section for $\mathcal{G}$, equal to the
mapping from $x$ to the itinerary of $x,$ and $\pi_{\mathcal{G}}%
:\Omega_{\mathcal{G}}\rightarrow\lbrack0,1]$ is the (continuous, onto,
restriction to $\Omega_{\mathcal{G}}$ of the) coding map for $\mathcal{G}.$
Note that $\pi_{\mathcal{F}}=\tau_{\mathcal{F}}^{-1}$ and $\pi_{\mathcal{G}%
}=\tau_{\mathcal{G}}^{-1}$.

Let $\overline{\pi}_{\mathcal{F}}:\overline{\Omega_{\mathcal{F}}}%
\rightarrow\lbrack0,1]$ and $\overline{\pi}_{\mathcal{G}}:\overline
{\Omega_{\mathcal{G}}}\rightarrow\lbrack0,1]$ be the unique continuous
extensions of $\pi_{\mathcal{F}}$ and $\pi_{\mathcal{G}}$ respectively to the
closures of their domains. (They are the same as the restrictions of the
original coding maps (that act on $I^{\infty}$) to the closures of the
respective masked address spaces.)

\begin{theorem}
\label{directtheorem}Let $\Omega_{\mathcal{F}}=\Omega_{\mathcal{G}}$. Then
$\pi_{\mathcal{G}}\tau_{\mathcal{F}}:[0,1]\rightarrow\lbrack0,1]$ is a
homeomorphism. (Its inverse is $\pi_{\mathcal{F}}\tau_{\mathcal{G}}$.)
\end{theorem}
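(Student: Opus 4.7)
The plan is to show that $\pi_{\mathcal{G}}\tau_{\mathcal{F}}$ is a strictly increasing bijection of $[0,1]$ onto itself, from which the homeomorphism conclusion follows at no extra cost. Bijectivity will come from the hypothesis that the common address space $\Omega:=\Omega_{\mathcal{F}}=\Omega_{\mathcal{G}}$ carries two sides of a ``zipper'' of mutually inverse coding/section pairs, and monotonicity will come from Proposition~\ref{mihalachethm}(iii). No measure theory or explicit kneading analysis is needed.

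\emph{Step 1: bijectivity.} Since $\Omega$ is, by hypothesis, an address space for both $\mathcal{F}$ and $\mathcal{G}$, Proposition~\ref{basicconjugacythm}(i) (applied with $\tau=\tau_{\mathcal{F}}$) tells us that $\pi_{\mathcal{G}}\tau_{\mathcal{F}}:[0,1]\to[0,1]$ is a bijection. More concretely, $\pi_{\mathcal{F}}|_{\Omega}$ and $\pi_{\mathcal{G}}|_{\Omega}$ are bijections onto $[0,1]$ with inverses $\tau_{\mathcal{F}}$ and $\tau_{\mathcal{G}}$ respectively, so
\[
(\pi_{\mathcal{G}}\tau_{\mathcal{F}})^{-1}=\tau_{\mathcal{F}}^{-1}\pi_{\mathcal{G}}^{-1}=\pi_{\mathcal{F}}\tau_{\mathcal{G}}.
\]

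\emph{Step 2: monotonicity.} Proposition~\ref{mihalachethm}(iii), applied separately to $\mathcal{F}$ and to $\mathcal{G}$, gives that $\tau_{\mathcal{F}}$ and $\tau_{\mathcal{G}}$ are both strictly $\prec$-increasing from $([0,1],<)$ into $(\Omega,\prec)$. Inverting the second, $\pi_{\mathcal{G}}|_{\Omega}=\tau_{\mathcal{G}}^{-1}$ is strictly order preserving from $(\Omega,\prec)$ to $([0,1],<)$. Composing with $\tau_{\mathcal{F}}$ shows that the bijection $\pi_{\mathcal{G}}\tau_{\mathcal{F}}:[0,1]\to[0,1]$ is strictly increasing.

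\emph{Step 3: continuity.} A strictly increasing bijection of $[0,1]$ onto itself is automatically continuous, because a one-sided discontinuity at any point would create an interval-sized gap in the image and contradict surjectivity. The same remark applied to $\pi_{\mathcal{F}}\tau_{\mathcal{G}}$ shows that the inverse is continuous too. Hence $\pi_{\mathcal{G}}\tau_{\mathcal{F}}$ is a homeomorphism, as claimed. The only place that requires care is Step~2: we cannot derive monotonicity of $\pi_{\mathcal{G}}$ on all of $I^{\infty}$ from the IFS data alone, because the maps $g_0,g_1$ overlap; but on $\Omega$ it is forced by the fact that $\tau_{\mathcal{G}}$ is strictly $\prec$-increasing, which is exactly the content of Proposition~\ref{mihalachethm}(iii). (An equivalent route would be to check the fibre condition in Proposition~\ref{basicconjugacythm}(iii) using parts (ii),(iv),(vii) of Proposition~\ref{mihalachethm}, the crucial observation being that both itineraries $\alpha=\tau(\rho)$ and $\beta=\tau^+(\rho)$ are recoverable from $\Omega$ alone, so both coding maps identify exactly the same pairs in $\overline{\Omega}$.)
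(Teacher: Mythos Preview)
Your proof is correct, and it is genuinely different from the paper's. You exploit the one--dimensional order structure: Proposition~\ref{mihalachethm}(iii) gives that both $\tau_{\mathcal{F}}$ and $\tau_{\mathcal{G}}$ are strictly $\prec$-increasing bijections onto $\Omega$, so $\pi_{\mathcal{G}}\tau_{\mathcal{F}}$ is a strictly increasing bijection of $[0,1]$, and then the elementary real-analysis fact that a monotone surjection of an interval is continuous finishes things off. The paper instead argues topologically, without appealing to the order: it shows that the composite $\pi_{\mathcal{G}}\tau_{\mathcal{F}}\overline{\pi}_{\mathcal{F}}:\overline{\Omega}\to[0,1]$ agrees with $\pi_{\mathcal{G}}$ on $\Omega$, checks (using the two-point fibre structure of $\overline{\Omega}\setminus\Omega$) that it coincides with the continuous extension $\overline{\pi}_{\mathcal{G}}$ on all of $\overline{\Omega}$, and then invokes a quotient/factoring argument to descend continuity through the surjection $\overline{\pi}_{\mathcal{F}}$. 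Your route is shorter and more transparent here, but it is special to the case where the attractor carries a linear order compatible with $\prec$; the paper's argument, by contrast, is written in a form that transfers to the general fractal-transformation setting of Section~\ref{transformsec}, where attractors need not be intervals. Your parenthetical alternative via Proposition~\ref{basicconjugacythm}(iii) is closer to the paper's spirit, though the paper bypasses that proposition and works directly with the extensions $\overline{\pi}_{\mathcal{F}},\overline{\pi}_{\mathcal{G}}$.
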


\begin{proof}
Let $\Omega=\Omega_{\mathcal{F}}=\Omega_{\mathcal{G}}$. Consider the mapping
$\pi_{\mathcal{G}}\tau_{\mathcal{F}}\overline{\pi}_{\mathcal{F}}%
:\overline{\Omega}\rightarrow\lbrack0,1]$. It is observed that $\tau
_{\mathcal{F}}\overline{\pi}_{\mathcal{F}}\sigma=\sigma$ for all $\sigma
\in\Omega$ so $\pi_{\mathcal{G}}\tau_{\mathcal{F}}\overline{\pi}_{\mathcal{F}%
}\sigma=\pi_{\mathcal{G}}\sigma$ is continuous at all points $\sigma\in
\Omega.$ In fact, since $\pi_{\mathcal{G}}$ is uniformly continuous it follows
that $(\pi_{\mathcal{G}}\tau_{\mathcal{F}}\overline{\pi}_{\mathcal{F}%
})|_{\Omega}:\Omega\rightarrow\lbrack0,1]$ is uniformly continuous. Hence it
has a unique continuous extension to $\overline{\Omega},$ namely
$\overline{\pi}_{\mathcal{G}}.$

We want to show that this continuous extension is the same as $\pi
_{\mathcal{G}}\tau_{\mathcal{F}}\overline{\pi}_{\mathcal{F}}$. This will show
that $\pi_{\mathcal{G}}\tau_{\mathcal{F}}\overline{\pi}_{\mathcal{F}%
}:\overline{\Omega}\rightarrow\lbrack0,1]$ is continuous.

Suppose $\overline{\sigma}\in\overline{\Omega}\backslash\Omega$. Then, by what
we know about the masked code spaces in question, there is a second point
$\sigma\in\Omega$ such that $\pi_{\mathcal{G}}\sigma=$ $\overline{\pi
}_{\mathcal{G}}\overline{\sigma}$ and $\pi_{\mathcal{F}}\sigma=$
$\overline{\pi}_{\mathcal{F}}\overline{\sigma}$. Hence we have $\pi
_{\mathcal{G}}\tau_{\mathcal{F}}\overline{\pi}_{\mathcal{F}}\overline{\sigma
}=\pi_{\mathcal{G}}\tau_{\mathcal{F}}\pi_{\mathcal{F}}\sigma=\pi_{\mathcal{G}%
}\sigma=\overline{\pi}_{\mathcal{G}}\overline{\sigma},$ which is what we
wanted to show.

We conclude this first part of the proof with the conclusion that
$\pi_{\mathcal{G}}\tau_{\mathcal{F}}\overline{\pi}_{\mathcal{F}}%
:\overline{\Omega}\rightarrow\lbrack0,1]$ is continuous.

Now use that facts that (1) $\pi_{\mathcal{G}}\tau_{\mathcal{F}}\overline{\pi
}_{\mathcal{F}}:\overline{\Omega}\rightarrow\lbrack0,1]$ is continuous and (2)
$\overline{\pi}_{\mathcal{F}}:\overline{\Omega}\rightarrow\lbrack0,1]$ is
continuous to conclude, by a well-known theorem (i.e. the one cited in the
Monthly), that $\pi_{\mathcal{G}}\tau_{\mathcal{F}}:[0,1]\rightarrow
\lbrack0,1]$ is continuous.

Similarly prove that $\pi_{\mathcal{F}}\tau_{\mathcal{G}}:[0,1]\rightarrow
\lbrack0,1]$ is continuous.\ Finally check that $\pi_{\mathcal{F}}%
\tau_{\mathcal{G}}\pi_{\mathcal{G}}\tau_{\mathcal{F}}=i_{[0,1]}$.
\end{proof}

\subsection{\label{trapsec}The special structure of the trapping region}

The material in this section is not needed towards the proof Theorems
\ref{theorem1} and \ref{theorem2ndpart}, but is of independent interest
because it concludes with Theorem \ref{shifttheorem} which connects the
present work to general binary symbolic dynamical systems.

Consider $W:[0,1]\rightarrow\lbrack0,1]$ as at the start of Section
\ref{introsec}. It is readily established that $W(0)=0$, $W(1)=1,$ and, for
any given $x\in(0,1),$ there exists an integer $K$ so that
\[
W^{k}(x)\in(W_{1}(\rho),W_{0}(\rho)]\text{ for all }k>K.
\]
Similarly for $W^{+}:[0,1]\rightarrow\lbrack0,1]$ we have $W^{+}(0)=0$,
$W^{+}(1)=1,$ and, for any given $x\in(0,1),$ there exists an integer $K$ so
that
\[
(W^{+})^{k}(x)\in\lbrack W_{1}(\rho),W_{0}(\rho))\text{ for all }k>K.
\]
We call $D=$ $[W_{1}(\rho),W_{0}(\rho)]$ the \textit{trapping region}. It is
readily checked that both $W|_{D}:D\rightarrow D$ and $W^{+}|_{D}:D\rightarrow
D$ are topologically transitive, and both are sensitively dependent on initial conditions.

The sets $D$ and $\Omega$ can be described in terms of $\tau(D)$. There is
some redundancy among the statements in Proposition \ref{trapprop}, but all
are informative. We define $s_{i}:I^{\infty}\rightarrow I^{\infty}$ by
$s_{i}(\sigma_{0}\sigma_{1}\sigma_{2}...)=i\sigma_{0}\sigma_{1}\sigma
_{2}...(i=0,1).$

\begin{proposition}
[\cite{mihalache}]\label{trapprop} Let $\tau(\rho)=\alpha$ and $\tau^{+}%
(\rho)=\beta$.

(i) $\overline{\tau(D)}=\tau(D)\cup\tau^{+}(D);$

(ii) $S(\tau(D))=\tau(D),S(\tau^{+}(D))=\tau^{+}(D),$ and $S(\overline
{\tau(D)})=\overline{\tau(D)}$;

(iii) $\alpha=01\alpha_{2}\alpha_{3}...;$ $\beta=10\beta_{2}\beta_{3}...$;

(iv) both $S^{k}(\alpha)\in\lbrack\overline{0},\alpha]\cup\lbrack
\beta,\overline{1}]$ and $S^{k}(\beta)\in\lbrack\overline{0},\alpha
]\cup\lbrack\beta,\overline{1}]$ for all $k\in\mathbb{N};$

(v) $\overline{\tau(D)}=\bigcap\limits_{k=0}^{\infty}F^{k}([S\left(
\beta\right)  ,S\left(  \alpha\right)  ])$ where $F:\mathbb{H(}I^{\infty
})\rightarrow\mathbb{H(}I^{\infty})$ is defined by%
\[
F(\Lambda)=s_{0}([S^{2}(\beta),S(\alpha)]\cap\Lambda)\cup s_{1}([S(\beta
),S^{2}(\alpha)]\cap\Lambda);
\]
(vi) $\overline{\tau(D)}=\bigcap\limits_{k=0}^{\infty}F^{k}([S(\beta
),\alpha]\cup\lbrack\beta,S(\alpha)])$;

(vii) $\overline{\Omega}=\{\sigma\omega:\sigma\in\{0\}^{k}\cup\{1\}^{k}%
,k=\mathbb{N}$,$\omega\in\overline{\tau(D)}\}$.
\end{proposition}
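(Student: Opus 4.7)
The plan is to derive parts (i)--(iv) and (vii) as direct consequences of Proposition~\ref{mihalachethm} and elementary dynamics of $W$, and to establish the self-similar descriptions (v)--(vi) by identifying $\overline{\tau(D)}$ as the unique compact fixed point of the operator $F$. For (i), I would localize Proposition~\ref{mihalachethm}(i) to $D$: every accumulation point of $\tau(D)$ in $I^\infty$ lies either in $\tau(D)$ (since $\Omega$ is closed from the left) or in $\tau^+(D)$ (since $\Omega_+$ is closed from the right). For (ii), I would combine the semiconjugacy $\tau\circ W=S\circ\tau$ from Proposition~\ref{mihalachethm}(v) with forward invariance of the trapping region $(W_1(\rho),W_0(\rho)]$ under $W$, which is a direct verification from $f_0(0)=0$, $f_1(1)=1$, and contractivity. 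Part (iii) is immediate from $f_0(\rho)<\rho$ and $f_1(\rho)>\rho$, which force $W(\rho)>\rho$ and $W^+(\rho)<\rho$. Part (iv) follows by specializing Proposition~\ref{mihalachethm}(vii) to $\alpha\in\Omega$ and $\beta\in\Omega_+$ and taking the union of the two resulting inclusions. For (vii), fix $x\in(0,1)$ and let $K$ be the first time $W^K(x)\in D$; on $[0,W_1(\rho))$ the dynamics $W=f_0^{-1}$ is strictly increasing and confined to $[0,W_0(\rho)]$, producing a prefix $0^K$, and symmetrically a prefix $1^K$ for $x>W_0(\rho)$; lifting via (i) gives (vii).

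For (v), I would first identify the code-space endpoints of $D$ via the semiconjugacies $\tau\circ W=S\circ\tau$ and $\tau^+\circ W^+=S\circ\tau^+$ applied at $\rho$: $\tau(W_0(\rho))=S(\alpha)$ and $\tau^+(W_1(\rho))=S(\beta)$, so $\overline{\tau(D)}\subseteq[S(\beta),S(\alpha)]$. Next I would verify the fixed-point identity $F(\overline{\tau(D)})=\overline{\tau(D)}$ by splitting $\overline{\tau(D)}$ according to the first symbol: elements starting with $0$ correspond, via $s_0^{-1}=S$, to itineraries of points in $W(D\cap[0,\rho])\subseteq[W_0W_1(\rho),W_0(\rho)]$, whose code-space image lies in $[S^2(\beta),S(\alpha)]$; symmetrically for elements starting with $1$. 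A Hutchinson-type argument using $\tfrac12$-contractivity of $s_0,s_1$ and inclusion-monotonicity of $F$ then shows that the decreasing sequence $F^k([S(\beta),S(\alpha)])$ converges to the unique compact fixed point $\overline{\tau(D)}$. For (vi) the same argument works once one observes that $s_0(S(\alpha))=\alpha$ and $s_1(S(\beta))=\beta$, so $F$ maps the larger seed $[S(\beta),\alpha]\cup[\beta,S(\alpha)]$ inside $[S(\beta),S(\alpha)]$ after one step, and the iterates converge to the same limit.

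The main obstacle is the bookkeeping in (v): the sections $\tau$ and $\tau^+$ disagree on the countable set $\bigcup_k W^{-k}(\rho)$, so the identifications $\tau(W_1(\rho))=S(\beta)$ and $\tau(W_0W_1(\rho))=S^2(\beta)$ hold only after passing to $\overline{\tau(D)}=\tau(D)\cup\tau^+(D)$. I would handle this by establishing the containment $\overline{\tau(D)}\subseteq[S(\beta),S(\alpha)]$ and the fixed-point identity $F(\overline{\tau(D)})=\overline{\tau(D)}$ directly at the level of the closure, and then invoking uniqueness of the attractor of the cutoff IFS within the invariant subclass of compact subsets of $[S(\beta),S(\alpha)]$, on which $F$ remains a Hausdorff contraction.
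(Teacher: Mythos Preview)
Your outline for (i)--(iv) and (vii) is sound and in fact more explicit than the paper, which simply cites \cite[Proposition~3]{mihalache} together with the remark that the $S$-orbits of $\alpha$ and $\beta$ stay in $\overline{\tau(D)}$.

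For (v)--(vi) there is a genuine gap. You propose to ``invok[e] uniqueness of the attractor of the cutoff IFS within the invariant subclass of compact subsets of $[S(\beta),S(\alpha)]$, on which $F$ remains a Hausdorff contraction.'' But $F$ is \emph{not} a Hausdorff contraction on that class: the preliminary intersection with the windows $[S^{2}(\beta),S(\alpha)]$ and $[S(\beta),S^{2}(\alpha)]$ is not Lipschitz for the Hausdorff metric. If $\Lambda$ has an isolated point just inside a window and $\Lambda'$ has the corresponding point just outside, then $d_{H}(\Lambda,\Lambda')$ can be made arbitrarily small while $d_{H}(F(\Lambda),F(\Lambda'))$ stays bounded below. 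So neither the Banach fixed-point theorem nor the standard Hutchinson argument applies, and uniqueness of the fixed point is not available for free.

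The repair is to drop contractivity and argue by inverse limits, which is what the paper (via \cite{mihalache}) does; your own ingredients already suffice. Inclusion-monotonicity of $F$ together with $F([S(\beta),S(\alpha)])=[S(\beta),\alpha]\cup[\beta,S(\alpha)]\subseteq[S(\beta),S(\alpha)]$ gives a decreasing sequence $K_{n}:=F^{n}([S(\beta),S(\alpha)])$, and your fixed-point identity $F(\overline{\tau(D)})=\overline{\tau(D)}$ yields $\overline{\tau(D)}\subseteq\bigcap_{n}K_{n}$. For the reverse inclusion, unwind $F^{n}$ directly: $\sigma\in K_{n}$ forces $S^{j}(\sigma)\in[S(\beta),\alpha]\cup[\beta,S(\alpha)]\subseteq[\overline{0},\alpha]\cup[\beta,\overline{1}]$ for $0\le j<n$, so any $\sigma\in\bigcap_{n}K_{n}$ lies in $\overline{\Omega}$ by Proposition~\ref{mihalachethm}(vii), and the sharper range $[S(\beta),S(\alpha)]$ then pins it to $\overline{\tau(D)}$ via part (vii) of the present proposition. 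Since $K_{1}$ is exactly the seed in (vi), this also shows that (v) and (vi) coincide.
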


\begin{proof}
These statements are all direct consequences of \cite[Proposition
3]{mihalache} and the fact that the orbits of $\alpha$ and $\beta$ under
$S|_{\overline{\Omega}}:\overline{\Omega}\rightarrow\overline{\Omega}$ must
actually remain in $\overline{\tau(D)}$, the closure of the set of addresses
of the points in the trapping region. Of particular importance are (v), (vi)
and (vii) which taken together provide a detailed description of
$\overline{\Omega}$.
\end{proof}

The following Theorem provides characterizing information about \textit{all
}shift invariant subspaces of $I^{\infty}$. This is remarkable:\ it implies,
for example if $S\alpha\succ S^{2}\alpha\succ\beta\succ\alpha\succ$
$S^{2}\beta\succ S\beta$ then the overarching set contains a trapping region,
and submits to the description in (v), (vi) and (vii).

\begin{theorem}
\label{shifttheorem} Let $\Xi\subset I^{\infty}$ be shift foward invariant,
and let the following quantities be well defined:
\begin{align*}
\beta &  =\inf\{\sigma\in\Xi:\sigma_{0}=1\},\\
\alpha &  =\sup\{\sigma\in\Xi:\sigma_{0}=0\}.
\end{align*}
If $S\alpha\succ\beta$ and $\alpha\succ S\beta$ (the cases where one or other
of these two conditions does not hold are very simple and readily analyzed),
then $\overline{\Xi}\subset\overline{\Omega}$ where $\overline{\Omega}$ is
defined by Proposition \ref{trapprop} (v) and (vii). In particular, if
$S\alpha\succ S^{2}\alpha\succ\beta\succ\alpha\succ$ $S^{2}\beta\succ S\beta$
then the orbits of all points, except $\overline{0}$ and $\overline{1}$ end
up, after finitely many steps, in the associated trapping region defined in
Proposition \ref{trapprop} (vi).
\end{theorem}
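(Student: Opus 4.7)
The plan is to prove the first assertion by a clean shift-forward invariance argument and to obtain the second assertion as a direct corollary via the structural description in Proposition \ref{trapprop} (vii). The main obstacle will be confirming that the abstract extremal sequences $\alpha$ and $\beta$ are entitled to play the same role in the theory that the itineraries $\tau(\rho)$ and $\tau^{+}(\rho)$ played in Propositions \ref{mihalachethm} and \ref{trapprop}.

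First I would verify that $\alpha = 01\alpha_{2}\alpha_{3}\ldots$ and $\beta = 10\beta_{2}\beta_{3}\ldots$, matching Proposition \ref{trapprop} (iii). That $\alpha_{0}=0$ and $\beta_{0}=1$ is immediate, because $\{\sigma\in I^{\infty}:\sigma_{0}=0\}$ is a $\preceq$-closed initial segment of $I^{\infty}$ (so its supremum still starts with $0$), and symmetrically for $\{\sigma_{0}=1\}$. The hypothesis $S\alpha\succ\beta$ together with $\beta_{0}=1$ forces $(S\alpha)_{0}=\alpha_{1}=1$, and $\alpha\succ S\beta$ with $\alpha_{0}=0$ forces $\beta_{1}=0$. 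Thus $\alpha$ and $\beta$ combinatorially mimic the two itineraries of the discontinuity in a masked IFS.

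Next, for the containment $\overline{\Xi}\subset\overline{\Omega}$, take any $\sigma\in\Xi$ and any $k\in\mathbb{N}$. Shift-forward invariance gives $S^{k}\sigma\in\Xi$. The first symbol of $S^{k}\sigma$ is either $0$, in which case $S^{k}\sigma\preceq\alpha$ by definition of $\alpha$ as a supremum, or $1$, in which case $S^{k}\sigma\succeq\beta$ by definition of $\beta$ as an infimum. Hence
\[
S^{k}\sigma\in[\overline{0},\alpha]\cup[\beta,\overline{1}]\quad\text{for all }k\in\mathbb{N}.
\]
Because this union of order intervals is closed in $I^{\infty}$ and $S$ is continuous, the same conclusion passes to every $\sigma\in\overline{\Xi}$. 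Read with our abstract $\alpha,\beta$ in place of $\tau(\rho),\tau^{+}(\rho)$, Proposition \ref{mihalachethm} (vii) identifies this set as precisely $\overline{\Omega}$, so $\overline{\Xi}\subseteq\overline{\Omega}$.

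Finally, the orbit statement follows from the first assertion together with Proposition \ref{trapprop} (vii): under the stronger chain $S\alpha\succ S^{2}\alpha\succ\beta\succ\alpha\succ S^{2}\beta\succ S\beta$, every element of $\overline{\Omega}$ other than $\overline{0}$ and $\overline{1}$ decomposes as $\sigma'\omega$ with $\sigma'\in\{0\}^{k}\cup\{1\}^{k}$ for some finite $k\geq 0$ and $\omega\in\overline{\tau(D)}$, so $S^{k}\sigma=\omega$ enters the trapping region in finitely many steps. The principal obstacle is to confirm that the proofs of Propositions \ref{mihalachethm} (vii) and \ref{trapprop} in \cite{mihalache} use only the combinatorial features of $\alpha,\beta$ already established in the first paragraph, so that they transfer verbatim to the present abstract setting where $\alpha$ and $\beta$ arise not from a discontinuity of a masked dynamical system but from sup/inf over a shift-forward invariant subset of $I^{\infty}$.
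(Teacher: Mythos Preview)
Your proposal is correct and follows essentially the same route as the paper: from the definitions of $\alpha$ and $\beta$ you deduce $S^{k}\sigma\in[\overline{0},\alpha]\cup[\beta,\overline{1}]$ for every $\sigma\in\Xi$ and all $k$, pass to the closure, and then identify the resulting set with $\overline{\Omega}$ by appeal to the combinatorial description in \cite{mihalache}. The paper's proof is nearly identical, only adding the explicit remark that in particular $S^{k}\alpha$ and $S^{k}\beta$ lie in $[\overline{0},\alpha]\cup[\beta,\overline{1}]$ (your Proposition~\ref{trapprop}~(iv) for the abstract pair), and then, exactly like you, it defers the remaining identification to \cite[Proposition~3]{mihalache}; so the ``principal obstacle'' you flag is precisely the step the paper also outsources.
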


\begin{proof}
Begin by noting that, by definition of $\alpha$ and $\beta$ we must have
$\overline{\Xi}\subset\lbrack\overline{0},\alpha]\cup\lbrack\beta,\overline
{1}].$ Since $S\overline{\Xi}=\overline{\Xi}$ we must have $S^{k}\sigma
\in\lbrack\overline{0},\alpha]\cup\lbrack\beta,\overline{1}]$ for all
$k\in\mathbb{N},$ for all $\sigma\in\Xi$. In particular, both $S^{k}%
(\alpha)\in\lbrack\overline{0},\alpha]\cup\lbrack\beta,\overline{1}]$ and
$S^{k}(\beta)\in\lbrack\overline{0},\alpha]\cup\lbrack\beta,\overline{1}]$ for
all $k\in\mathbb{N}.$ The result now follows by taking inverse limits as in
the first part of the proof of Proposition 3 in \cite{mihalache}.
\end{proof}

\begin{example}
It is well known that any dynamical system $f:X\rightarrow X$ can be
represented coarsely, by choosing as subset $O\subset X$ and defining
itineraries $\sigma_{k}=0$ if $f^{k}(x)\in O$, $=1$ otherwise. The resulting
set of itineraries is foward shift invariant, so Theorem \ref{shifttheorem}
applies. In many cases one can impose conditions on the original dynamical
system, such as a topology on $X$ and continuity, to cause Theorem
\textit{\ref{shifttheorem} to yield much stronger conclusions, such as
Sharkovskii's Theorem, and results of Milnor-Thurston. But even as it stands,
the theorem is very powerful, as the following simple example, which I have
not been able to find in the literature, demonstrates. Note that it is a kind
of min/max theorem.}
\end{example}

\begin{example}
Let $f:\{1,2,3,...\}\rightarrow\{1,2,3,...\}\ni x\mapsto x+1$ be a dynamical
system. Let $O\subset\{1,2,3,...\}$ be the set of integers that are not
primes, $O=\{1,4,6,8,9,10,....\}$. Let $\{f^{n}(x)\}_{n=0}^{\infty}$ denote
the orbit of $x$ and let $\tau(x)=\sigma\in I^{\infty}$ denote the
corresponding symbolic orbit defined by $\sigma_{k}=0$ if $f^{k}(x)\in O$,
$=1$ otherwise. Let $\Omega=\tau(\{1,2,3,...\})$. Then $S\Omega\subset\Omega$
so we can apply Theorem \ref{shifttheorem}. We readily find that
$\alpha=011010100...$ and $\beta=1000...$. It follows that
\[
\pi_{z}(\alpha)=(1-z)\sum\limits_{P\text{=Prime}}z^{P-1}\text{ and }\pi
_{z}(\beta)=(1-z)
\]
whence the corresponding piecewise linear dynamical system is $L$ with slope
$1/z$ and $p=(1-z)$ where $z$ is the positive solution of
\[
z=\sum\limits_{P\text{=Prime}}z^{P}\text{.}%
\]
It is readily verified, with the help of Theorem 1 that the dynamical system
$L=L_{\gamma=1/z,p=(1-z)}$ has this property: for all $n=0,1,2,...$we have
$L^{n}(1-z)>(1-z)$ iff $n+1$ is prime.
\end{example}

\begin{example}
This result also holds numerically, approximately: using Maple we find that
the solution to $\sum\limits_{P\text{=Prime,}P\leq29}z^{P}=z$ is approximately
$\tilde{z}=0.55785866$. Then, setting $\tilde{L}=L_{\gamma=1/\tilde
{z},p=(1-\tilde{z})}$, we find numerically that for $n\leq19,$ $\tilde{L}%
^{n}(1-\tilde{z})\in(1-\tilde{z},1]$ iff $n+1$ is prime.
\end{example}

\subsection{Affine systems}

Here we consider the case where $W$ is piecewise affine and, in particular, of
the form of $L$, on the right in Figure \ref{figaug17}. Let
\[
0<a,b<1,1\leq a+b,
\]%
\[
\mathcal{F}=\mathcal{F}(a,b)=([0,1]\subset\mathbb{R};f_{1}(x)=ax,f_{2}%
(x)=bx+(1-b)).
\]
The attractor of $\mathcal{F}$ is $A=[0,1]\subset\mathbb{R}$, and the coding
map is $\pi:I^{\infty}\rightarrow\lbrack0,1].$ Define a one parameter family
of masks
\[
\{\mathcal{M}_{\rho}:0<1-b\leq\rho\leq a<1\}
\]
for $\mathcal{F}$ by $M_{0}=[0,\rho],$ $M_{1}=(\rho,1]$. The corresponding
masked dynamical system is
\[
W:[0,1]\rightarrow\lbrack0,1],x\mapsto\left\{
\begin{array}
[c]{c}%
x/a\text{ if }x\in\lbrack0,\rho],\\
x/b+(1-1/b)\text{ otherwise}.
\end{array}
\right.
\]
The set of allowed parameters $\mathcal{(}a,b,\rho)$ is defined by the
simplex
\[
\mathcal{P=}\{\mathcal{(}a,b,\rho)\in\mathbb{R}^{3}:0<a,b<1,1\leq
a+b,1-b\leq\rho<a\}.
\]
We either suppress or reference $\mathcal{(}a,b,\rho)\in\mathcal{P}$ depending
on the context. The section is $\tau=\tau(a,b,\rho)$, the address space is
$\Omega=\Omega(a,b,\rho)=\tau(a,b,\rho)([0,1])=\tau([0,1]),$ and the masked
dynamical system is $W=W(a,b,\rho)$. We will denote the coding map for
$\mathcal{F}(a,b)$ by $\pi_{a,b}:I^{\infty}\rightarrow\lbrack0,1]$. In the
case where $a=b=\varsigma$ we will write this coding map as $\pi_{\varsigma
}:I^{\infty}\rightarrow\lbrack0,1]$.

The affine case is the key to Theorem 1, because we can write down explicitly
the mapping $\pi_{a,b}$ using the following explicit formulas:%

\begin{align*}
f_{i}(x)  &  =a^{1-i}b^{i}x+i(1-b)\text{ }(i=0,1)\\
f_{\sigma_{i}}(x)  &  =a^{1-\sigma_{i}}b^{\sigma_{i}}x+\sigma_{i}(1-b)\text{
}\\
f_{\sigma_{0}}(f_{\sigma_{1}}(x))  &  =a^{1-\sigma_{0}}b^{\sigma_{0}%
}a^{1-\sigma_{1}}b^{\sigma_{1}}x+(\sigma_{1}a^{1-\sigma_{0}}b^{\sigma_{0}%
}+\sigma_{0})(1-b)\\
&  =a^{2-\sigma_{0}-\sigma_{1}}b^{\sigma_{0}+\sigma_{1}}x+(a^{1-\sigma_{0}%
}b^{\sigma_{0}}\sigma_{1}+\sigma_{0})(1-b)\\
f_{\sigma_{0}}(f_{\sigma_{1}}(f_{\sigma_{2}}(x)))  &  =a^{3-\sigma_{0}%
-\sigma_{1}-\sigma_{2}}b^{\sigma_{0}+\sigma_{1}+\sigma_{2}}+(a^{1-\sigma
_{0}-\sigma_{1}}b^{\sigma_{0}+\sigma_{1}}\sigma_{2}+a^{1-\sigma_{0}}%
b^{\sigma_{0}}\sigma_{1}+\sigma_{0})(1-b),\\
&  \text{\textit{and so on....}}%
\end{align*}
We deduce%
\[
\pi_{a,b}(\sigma)=(1-b)\sum\limits_{k=0}^{\infty}a^{k-\sigma_{0}-\sigma
_{1}-...\sigma_{k-1}}b^{\sigma_{0}+\sigma_{1}+...\sigma_{k-1}}\sigma
_{k}\text{.}%
\]
Clearly, for each fixed $\sigma\in I^{\infty},$ $\pi_{a,b}(\sigma)$ is
analytic in $(a,b)\in\mathbb{C}^{2}$ with $|a|<1$ and $|b|<1.$ Also, since
\[
\left\vert \sum\limits_{k=0}^{\infty}a^{k-\sigma_{0}-\sigma_{1}-...\sigma
_{k-1}}b^{\sigma_{0}+\sigma_{1}+...\sigma_{k-1}}\sigma_{k}\right\vert \leq
\sum\limits_{k=1}^{\infty}\left(  \max\{a,b\}\right)  ^{k}\sigma_{k}%
\]
we have that%
\[
\left\vert \pi(\sigma)(a,b)\right\vert \leq(1-\max\{a,b\})^{-1}(1-b).
\]

It follows that%
\[
\pi_{\varsigma}(\sigma)=(1-\varsigma)\sum\limits_{k=0}^{\infty}\sigma
_{k}\varsigma^{k}\text{.}%
\]
Let $\sigma\in I^{\infty}$ be fixed for now, and let $\varsigma\in\mathbb{C}$
with $\left\vert \varsigma\right\vert <1$. We have
\[
\left\vert \sum\limits_{k=0}^{\infty}\sigma_{k}\varsigma^{k}\right\vert
\leq\sum\limits_{k=0}^{\infty}\left\vert \varsigma\right\vert ^{k}%
\leq(1-\left\vert \varsigma\right\vert )^{-1}\text{.}%
\]
Hence $\pi_{\varsigma}(\sigma)$ is analytic for $\left\vert \varsigma
\right\vert <1$ and can be continued to a continuous function on $\left\vert
\varsigma\right\vert \leq1.$ In particular $\pi_{1}(\sigma)$ is well defined,
finite and real.

\section{Proof of\ Theorem \ref{theorem1}}

\begin{lemma}
\label{lemmaA0} If $\sigma,\omega\in\overline{\Omega}$ with $\sigma\prec
\omega$ then there is $\theta\in I^{k}$ for some $k\in\mathbb{N}$ such that
both $\theta\alpha$ and $\theta\beta$ belong to $\overline{\Omega}$ and
\[
\sigma\preceq\theta\alpha\prec\theta\beta\preceq\omega.
\]

\end{lemma}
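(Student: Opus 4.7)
The plan is to take $\theta$ to be the common prefix of $\sigma$ and $\omega$ up through their first disagreement. Let $k\geq 0$ be the least index with $\sigma_{k}\neq\omega_{k}$; because $\sigma\prec\omega$, necessarily $\sigma_{k}=0$ and $\omega_{k}=1$, and I set $\theta := \sigma_{0}\sigma_{1}\cdots\sigma_{k-1} = \omega_{0}\omega_{1}\cdots\omega_{k-1} \in I^{k}$ (the empty word when $k=0$). Writing $\sigma=\theta\sigma'$ and $\omega=\theta\omega'$, shift invariance of $\overline{\Omega}$ (Proposition~\ref{mihalachethm}(vi)) gives $\sigma',\omega'\in\overline{\Omega}$; and since $\sigma'$ starts with $0$ while $\omega'$ starts with $1$, Proposition~\ref{mihalachethm}(vii) forces $\sigma'\preceq\alpha$ and $\omega'\succeq\beta$. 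Lexicographic monotonicity with the common prefix $\theta$ then yields
\[
\sigma = \theta\sigma' \preceq \theta\alpha \prec \theta\beta \preceq \theta\omega' = \omega.
\]

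What remains is to verify $\theta\alpha,\theta\beta\in\overline{\Omega}$. By Proposition~\ref{mihalachethm}(vii), this amounts to checking that $S^{j}(\theta\alpha)$ and $S^{j}(\theta\beta)$ lie in $[\overline{0},\alpha]\cup[\beta,\overline{1}]$ for every $j\geq 0$. For $j\geq k$ these reduce to shifts of $\alpha$ or $\beta$, already handled by Proposition~\ref{trapprop}(iv). For $0\leq j<k$, one has $S^{j}(\theta\alpha)=\theta_{j}\cdots\theta_{k-1}\alpha$ and $S^{j}(\theta\beta)=\theta_{j}\cdots\theta_{k-1}\beta$, and lexicographic monotonicity combined with $\sigma'\preceq\alpha\prec\beta\preceq\omega'$ gives
\[
S^{j}(\sigma) \preceq S^{j}(\theta\alpha) \prec S^{j}(\theta\beta) \preceq S^{j}(\omega).
\]

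The main point, and what I expect to be the only subtlety, is showing that this sandwich keeps $S^{j}(\theta\alpha)$ and $S^{j}(\theta\beta)$ out of the forbidden gap $(\alpha,\beta)$. The key observation is that $S^{j}(\sigma)$ and $S^{j}(\omega)$ both begin with the same symbol $\theta_{j}$, so they lie in the same component of $[\overline{0},\alpha]\cup[\beta,\overline{1}]$: if $\theta_{j}=0$ both lie in $[\overline{0},\alpha]$, and the sandwich forces $S^{j}(\theta\alpha),S^{j}(\theta\beta)\preceq S^{j}(\omega)\preceq\alpha$; if $\theta_{j}=1$ both lie in $[\beta,\overline{1}]$, and the sandwich forces $S^{j}(\theta\alpha),S^{j}(\theta\beta)\succeq S^{j}(\sigma)\succeq\beta$. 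Either way the intermediate terms stay in the required set, which completes the verification and hence the lemma.
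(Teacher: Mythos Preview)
Your proof is correct and follows the same approach as the paper: choose $\theta$ to be the common prefix of $\sigma$ and $\omega$ up to the first disagreement, use the characterization of $\overline{\Omega}$ to get $S^{k}\sigma\preceq\alpha\prec\beta\preceq S^{k}\omega$, and conclude. The only difference is in how membership of $\theta\alpha,\theta\beta$ in $\overline{\Omega}$ is established: the paper simply invokes \cite[Proposition~3]{mihalache} together with $S\beta\prec\alpha\prec\beta\prec S\alpha$, whereas you supply a self-contained sandwich argument using Proposition~\ref{mihalachethm}(vii) and the fact that $S^{j}\sigma$ and $S^{j}\omega$ share the leading symbol $\theta_{j}$ for $j<k$---this is a nice explicit verification, but not a materially different strategy.
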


\begin{proof}
Let $\sigma,\omega\in\overline{\Omega}$ with $\sigma\prec\omega$. Then there
is $n\in\mathbb{N}$ such that $\sigma|_{n}=\omega|_{n}$, $\sigma_{n}%
=0,\omega_{n}=1$. It follow that $S^{n}\sigma\preceq\alpha\prec\beta\preceq
S^{n}\omega$ whence, setting $\theta=\sigma|_{n}$ we have the stated result.
Since $S\beta\prec\alpha\prec\beta\prec S\alpha$, it follows from
\cite[Proposition 3]{mihalache} that $\theta\alpha\in\Omega,\theta\beta
\in\Omega_{+},$ and both $\theta\alpha$ and $\theta\beta\in\overline{\Omega}.$
\end{proof}

\begin{lemma}
\label{lemmaA} If there is $\gamma\in(1/3,1)$ such that $\gamma=\inf\{\zeta
\in(0,1):\pi_{\varsigma}\alpha=$ $\pi_{\varsigma}\beta\},$ and if
$\varsigma<\gamma$, $\sigma,\omega\in\overline{\Omega}$ and $\sigma\prec
\omega,$ then $\pi_{\varsigma}(\sigma)<\pi_{\varsigma}(\omega)$ and
$\pi_{\gamma}(\sigma)\leq\pi_{\gamma}(\omega)$. If there is no such $\gamma
\in(1/3,1)$, then $\sigma,\omega\in\overline{\Omega}$ and $\sigma\prec\omega,$
imply $\pi_{\varsigma}(\sigma)<\pi_{\varsigma}(\omega)$ for all $\varsigma<1$.
\end{lemma}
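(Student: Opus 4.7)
My plan is to prove the lemma in two steps: first, establish the non-strict monotonicity $\pi_{\varsigma}(\sigma) \leq \pi_{\varsigma}(\omega)$ on $\overline{\Omega}$ for every $\varsigma \in (0,1)$ by a compactness-minimization argument, and then promote this to a strict inequality using Lemma~\ref{lemmaA0} together with positivity of $\phi(\varsigma) := \pi_{\varsigma}(\beta) - \pi_{\varsigma}(\alpha)$ on $(0,\gamma)$. The driving identity, obtained by splitting the defining series for $\pi_{\varsigma}$ at position $n$, is that whenever $\sigma,\omega \in I^{\infty}$ first disagree at position $n$,
\[
\pi_{\varsigma}(\omega) - \pi_{\varsigma}(\sigma) = \varsigma^{n}\,[\pi_{\varsigma}(S^{n}\omega) - \pi_{\varsigma}(S^{n}\sigma)].
\]

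For the non-strict step, I would consider the continuous function $d(\sigma,\omega) = \pi_{\varsigma}(\omega) - \pi_{\varsigma}(\sigma)$ on the compact set
\[
D = \{(\sigma,\omega) \in \overline{\Omega} \times \overline{\Omega} : \sigma \preceq \omega\}
\]
and let $(\sigma^{*},\omega^{*})$ attain the minimum value $d^{*}$. Assume for contradiction $d^{*} < 0$, and let $n$ be the first position of disagreement. Then $S^{n}\sigma^{*}$ starts with $0$ and $S^{n}\omega^{*}$ starts with $1$, so shift-invariance of $\overline{\Omega}$ (Proposition~\ref{mihalachethm}(vi)) combined with Proposition~\ref{mihalachethm}(vii) gives $S^{n}\sigma^{*} \preceq \alpha$ and $S^{n}\omega^{*} \succeq \beta$, placing $(S^{n}\sigma^{*}, S^{n}\omega^{*})$ back in $D$. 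The identity yields $d(S^{n}\sigma^{*}, S^{n}\omega^{*}) = d^{*}/\varsigma^{n}$, which is strictly more negative than $d^{*}$ since $\varsigma^{n} \in (0,1)$, contradicting the minimality of $(\sigma^{*},\omega^{*})$. Hence $d^{*} \geq 0$ and $\pi_{\varsigma}$ is weakly monotone on $\overline{\Omega}$ for every $\varsigma \in (0,1)$.

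For the strict step under $\varsigma < \gamma$, I apply Lemma~\ref{lemmaA0} to $\sigma \prec \omega$ in $\overline{\Omega}$ to obtain $\theta \in I^{k}$ with $\theta\alpha,\theta\beta \in \overline{\Omega}$ and $\sigma \preceq \theta\alpha \prec \theta\beta \preceq \omega$. The identity above specializes to $\pi_{\varsigma}(\theta\beta) - \pi_{\varsigma}(\theta\alpha) = \varsigma^{k}\phi(\varsigma)$. Since $\phi$ is continuous on $[0,1)$ with $\phi(0) = \beta_{0} - \alpha_{0} = 1$ and $\gamma$ is the infimum of the closed zero set of $\phi$, the intermediate value theorem forces $\phi > 0$ on $(0,\gamma)$ and $\phi(\gamma) = 0$. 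Combining with weak monotonicity yields
\[
\pi_{\varsigma}(\sigma) \leq \pi_{\varsigma}(\theta\alpha) < \pi_{\varsigma}(\theta\beta) \leq \pi_{\varsigma}(\omega)
\]
for $\varsigma < \gamma$, while at $\varsigma = \gamma$ the central gap collapses and we retain only $\pi_{\gamma}(\sigma) \leq \pi_{\gamma}(\omega)$. In the case where no $\gamma \in (1/3,1)$ exists, the elementary bound $\phi(\varsigma) \geq 1 - 2\varsigma$ (from $\alpha = 01\alpha_{2}\ldots$ and $\beta = 10\beta_{2}\ldots$, Proposition~\ref{trapprop}(iii)) ensures $\phi > 0$ on $(0,1/2)$, so the hypothesis forces $\phi > 0$ throughout $(0,1)$ and the strict chain applies for every $\varsigma < 1$.

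The main obstacle is the non-strict step: one must verify that a reversal at a minimizer propagates under the shift to a strictly worse reversal, while the shifted pair remains admissible. Amplification of the reversal is built into the identity (the factor $\varsigma^{-n} > 1$), and admissibility is precisely the content of Proposition~\ref{mihalachethm}(vi)--(vii), which forces the first-disagreement position to send $(\sigma^{*},\omega^{*})$ into a pair straddling $\alpha$ and $\beta$. Once these are in place, the strict case reduces to locating the first zero of the single continuous function $\phi$.
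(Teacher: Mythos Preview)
Your non-strict step has a genuine gap: when the first position of disagreement between $\sigma^{*}$ and $\omega^{*}$ is $n=0$, one has $\varsigma^{0}=1$ and the identity gives only $d(S^{0}\sigma^{*},S^{0}\omega^{*})=d^{*}$, so there is no strict improvement and no contradiction. This is not a patchable technicality, because the conclusion you draw from that step---weak monotonicity of $\pi_{\varsigma}$ on $\overline{\Omega}$ for \emph{every} $\varsigma\in(0,1)$---is false in general. In the just-touching case $\alpha=0\overline{1}$, $\beta=1\overline{0}$ (which lies within the paper's hypotheses, with $\rho=a=1-b$) one gets $\overline{\Omega}=I^{\infty}$ and $\phi(\varsigma)=1-2\varsigma$, hence $\gamma=1/2$; for any $\varsigma>1/2$ the pair $\alpha\prec\beta$ satisfies $\pi_{\varsigma}(\alpha)>\pi_{\varsigma}(\beta)$, and the minimum of $d$ over $D$ is attained precisely at $(\alpha,\beta)$ with $n=0$. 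Since your strict step and your treatment of the ``no $\gamma$'' case both rely on this weak monotonicity, the whole argument is incomplete.

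The paper avoids this by minimising in the parameter $\varsigma$ rather than over pairs at a fixed $\varsigma$. It defines $\hat{\varsigma}=\inf\{\varsigma\in(0,1):\exists\,\sigma,\omega\in\overline{\Omega},\ \sigma_{0}=0,\ \omega_{0}=1,\ \pi_{\varsigma}(\sigma)=\pi_{\varsigma}(\omega)\}$, realises the infimum by compactness, and derives a contradiction from $\hat{\varsigma}<\gamma$ via the intermediate value theorem applied to the four curves $\varsigma\mapsto\pi_{\varsigma}(\sigma),\pi_{\varsigma}(\alpha),\pi_{\varsigma}(\beta),\pi_{\varsigma}(\omega)$: at $\varsigma=1/3$ these are correctly ordered (since $\pi_{1/3}$ is the Cantor coding map, strictly order-preserving on all of $I^{\infty}$), at $\hat{\varsigma}$ the outer two coincide while $\pi_{\hat{\varsigma}}(\alpha)<\pi_{\hat{\varsigma}}(\beta)$ still holds, so one of the outer curves must cross $\pi_{\varsigma}(\alpha)$ or $\pi_{\varsigma}(\beta)$ at some $\xi<\hat{\varsigma}$; shifting that equality to its first disagreement yields a pair with $\sigma_{0}=0,\omega_{0}=1$ and $\pi_{\xi}$-equality, contradicting the minimality of $\hat{\varsigma}$. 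Your compactness idea can be repaired along these lines---track the first $\varsigma$ at which your minimum $d^{*}(\varsigma)$ hits zero and then sandwich $\alpha,\beta$ as above---but that is essentially the paper's argument rather than the fixed-$\varsigma$ minimisation you proposed.
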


\begin{proof}
Suppose that is $\gamma\in(1/3,1)$ such that $\gamma=\inf\{\zeta\in
(0,1):\pi_{\varsigma}\alpha=$ $\pi_{\varsigma}\beta\}.$ Let
\[
\hat{\varsigma}=\inf\{\varsigma\in(0,1):\exists\sigma,\omega\in\overline
{\Omega},\text{ }\sigma_{0}=0,\omega_{0}=1,\text{\textit{s.t}. }\pi
_{\varsigma}(\sigma)=\pi_{\varsigma}(\omega)\}.
\]
Using continuity of $\pi_{\varsigma}\sigma$ in $\varsigma$ and $\sigma,$ and
compactness of $\overline{\Omega}$ it follows that there is $\sigma,\omega
\in\overline{\Omega},$ with $\sigma_{0}=0,\omega_{0}=1,$ such that
\[
\pi_{\hat{\varsigma}}(\sigma)=\pi_{\hat{\varsigma}}(\omega).
\]
Suppose $\hat{\varsigma}<\gamma$. So, for brevity we will assume that both
$\sigma\neq\alpha$ and $\beta\neq\omega$; it will be seen that the other two
possibilities can be handled similarly. We have that
\[
\pi_{1/3}(\sigma)\leq\pi_{1/3}(\alpha)<\pi_{1/3}(\beta)\preceq\pi_{1/3}%
(\omega)
\]
because $\pi_{1/3}$ is order preserving. We also have,
\[
\pi_{\varsigma}(\alpha)<\pi_{\varsigma}(\beta)\text{ for all }\varsigma
\leq\hat{\varsigma},
\]
Hence, by the intermediate value theorem, since $\pi_{\hat{\varsigma}}%
(\sigma)=\pi_{\hat{\varsigma}}(\omega)$, there is $\xi\prec\hat{\varsigma}$
such that either $\pi_{\xi}(\sigma)=\pi_{\xi}(\alpha)$ with $\sigma\neq\alpha$
or $\pi_{\xi}(\beta)=\pi_{\xi}(\omega)$ with $\beta\neq\omega.$ That is,
either the graph of $\pi_{\xi}(\sigma)$, as a function of $\xi$, intersects
the graph of $\pi_{\xi}(\alpha)$ at some point $\xi<\hat{\varsigma}$ or the
graph of $\pi_{\xi}(\beta)$ intersects the graph of $\pi_{\xi}(\omega)$ at
some point $\xi<\hat{\varsigma}$. Suppose that $\pi_{\xi}(\sigma)=\pi_{\xi
}(\alpha)$ with $\sigma\neq\alpha.$ Let $k$ be the least integer such that
$\pi_{\xi}(\sigma)=\pi_{\xi}(\alpha)$ with $(S^{k}\sigma)_{0}\neq(S^{k}%
\alpha)_{0}$ and let $\{\tilde{\sigma},\tilde{\omega}\}=\{S^{k}\sigma
,S^{k}\alpha\}$ where $\tilde{\sigma}_{0}=0$ and $\tilde{\omega}_{0}=1.$ Then
we note that $\pi_{\xi}(\sigma)=\pi_{\xi}(\alpha)$ implies $\pi_{\xi}%
(\tilde{\sigma})=\pi_{\xi}(\tilde{\omega}),$with $\tilde{\sigma}_{0}=0$ and
$\tilde{\omega}_{0}=1$. Hence $\xi\geq\hat{\varsigma}$ which is a
contradiction. Similarly we show that it cannot occur that $\pi_{\xi}%
(\beta)=\pi_{\xi}(\omega)$ with $\beta\neq\omega.$ We conclude that
$\hat{\varsigma}=\gamma.$

Hence if $\varsigma<\gamma$, $\sigma,\omega\in\overline{\Omega}$ and
$\sigma\prec\omega,$ then we cannot have $\pi_{\varsigma}(\sigma
)=\pi_{\varsigma}(\omega).$ (For if so, then, similarly to above, let $k$ be
the least integer such that $\pi_{\xi}(\sigma)=\pi_{\xi}(\omega)$ with
$(S^{k}\sigma)_{0}\neq(S^{k}\omega)_{0}$ and set $\{\tilde{\sigma}%
,\tilde{\omega}\}=\{S^{k}\sigma,S^{k}\omega\}$ where $\tilde{\sigma}_{0}=0$
and $\tilde{\omega}_{0}=1;$ then $\pi_{\varsigma}(\sigma)=\pi_{\varsigma
}(\omega)$ would imply $\pi_{\xi}(\tilde{\sigma})=\pi_{\xi}(\tilde{\omega}%
),$with $\tilde{\sigma}_{0}=0$ and $\tilde{\omega}_{0}=1$.) Since $\pi
_{1/3}(\sigma)<\pi_{1/3}(\omega)$ we conclude (again appealing to the
intermediate value theorem) that $\pi_{\varsigma}(\sigma)<\pi_{\varsigma
}(\omega)$ for all $\varsigma<\gamma$ and $\pi_{\gamma}(\sigma)\leq\pi
_{\gamma}(\omega)$. This completes the proof of the first part of the lemma.

Now suppose that there is no $\gamma\in(1/3,1)$ such that $\gamma=\inf
\{\zeta\in(0,1):\pi_{\varsigma}\alpha=$ $\pi_{\varsigma}\beta\}$. Then, if
$\sigma,\omega\in\overline{\Omega}$ and $\sigma\prec\omega,$ the fact that
$\pi_{1/3}(\sigma)<\pi_{1/3}(\omega)$ and the continuity of both
$\pi_{\varsigma}(\sigma)$ and $\pi_{\varsigma}(\omega)$ in $\varsigma,$ for
all $\varsigma<1,$ imply $\pi_{\varsigma}(\sigma)<\pi_{\varsigma}(\omega)$ for
all $\varsigma<1$.
\end{proof}

The following lemma sharpens the first sentence in Lemma \ref{lemmaA}. (Recall
our notation $\theta|_{k}=\theta_{0}\theta_{1}...\theta_{k-1}$.)

\begin{lemma}
\label{lemmastrict} If there is $\gamma\in(1/3,1)$ such that $\gamma
=\inf\{\zeta\in(0,1):\pi_{\varsigma}\alpha=$ $\pi_{\varsigma}\beta\},$
$\sigma,\omega\in\Omega$ and $\sigma\prec\omega,$ then $\pi_{\gamma}%
(\sigma)<\pi_{\gamma}(\omega)$.
\end{lemma}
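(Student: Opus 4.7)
The plan is a proof by contradiction based on iterated application of Lemma~\ref{lemmaA0}, exploiting the shift-invariance of $\Omega$ together with $\beta\notin\Omega$.

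Suppose $\sigma,\omega\in\Omega$ with $\sigma\prec\omega$ and $\pi_\gamma(\sigma)=\pi_\gamma(\omega)$. Lemma~\ref{lemmaA0} produces $\theta\in I^k$ with $\sigma\preceq\theta\alpha\prec\theta\beta\preceq\omega$, and both $\theta\alpha\in\Omega$ and $\theta\beta\in\Omega_+$. Lemma~\ref{lemmaA} evaluated at $\varsigma=\gamma$ yields $\pi_\gamma(\sigma)\leq\pi_\gamma(\theta\alpha)\leq\pi_\gamma(\theta\beta)\leq\pi_\gamma(\omega)$, which collapses to a single value by hypothesis; in particular $\pi_\gamma(\theta\beta)=\pi_\gamma(\omega)$.

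The decisive observation is that $\theta\beta\notin\Omega$, since $S^{|\theta|}(\theta\beta)=\beta\in\Omega_+\setminus\Omega$ while shift-invariance of $\Omega$ (Proposition~\ref{mihalachethm}(vi)) would otherwise force $\beta\in\Omega$. Consequently $\omega\neq\theta\beta$, and since $\omega\succeq\theta\beta$ we obtain $\omega\succ\theta\beta$ strictly. Now iterate: applying Lemma~\ref{lemmaA0} to the distinct pair $(\theta\beta,\omega)$ in $\overline{\Omega}$ with equal $\pi_\gamma$ value extracts a longer prefix $\theta^{(2)}$ of $\omega$ with $\theta^{(2)}\beta\preceq\omega$, $\pi_\gamma(\theta^{(2)}\beta)=\pi_\gamma(\omega)$, and again $\omega\succ\theta^{(2)}\beta$. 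Continuing indefinitely produces an ascending chain $\theta=\theta^{(1)}\subsetneq\theta^{(2)}\subsetneq\cdots$ of prefixes of $\omega$ with $|\theta^{(n)}|\to\infty$, each obtained from the previous by appending an initial segment of $\beta$.

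Applying the affine-shift identity $\pi_\gamma(\mu\eta)=\pi_\gamma(\mu\overline{0})+\gamma^{|\mu|}\pi_\gamma(\eta)$ to the equality $\pi_\gamma(\omega)=\pi_\gamma(\theta\beta)$ reduces it to $\pi_\gamma(S^{|\theta|}\omega)=\pi_\gamma(\beta)$, and iterating at each stage of the construction produces a telescoping system of identities. In the extremal sub-case where every appended segment has length one, $\omega$ equals $\theta\cdot\overline{1}$, and the identity collapses to $\pi_\gamma(\overline{1})=\pi_\gamma(\beta)$, i.e.\ $1=\pi_\gamma(\beta)$, which is impossible because $\beta\prec\overline{1}$ (recall $\beta=\tau^+(\rho)$ with $\rho<1$). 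The general case follows by combining the telescoping relations with $\pi_\gamma(\alpha)=\pi_\gamma(\beta)$; any additional identity obtained, viewed as an analytic function of $\varsigma\in(1/3,1)$, would contradict the minimality $\gamma=\inf\{\varsigma:\pi_\varsigma(\alpha)=\pi_\varsigma(\beta)\}$ via an intermediate-value argument analogous to the proof of Lemma~\ref{lemmaA}.

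The main obstacle is the combinatorial bookkeeping in the general iteration (the appended segments can have varying lengths $j_n$) and the symmetric sub-case where $\sigma\prec\theta\alpha$ is strict, which is handled by mirroring the argument on the $\sigma$-side; both branches rely on the structural fact that $\Omega$ excludes the problematic $\theta\beta$-type sequences so the iteration can never terminate.
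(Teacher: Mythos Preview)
Your argument handles the extremal case $\omega=\theta\,\overline{1}$ correctly, but the ``general case'' is not actually proved. After the iteration you obtain, for each $n$, the identity $\pi_\gamma\bigl(S^{|\theta^{(n)}|}\omega\bigr)=\pi_\gamma(\beta)$; subtracting from the obvious expansion of $\pi_\gamma(\beta)$ along the same prefix gives $\pi_\gamma(\beta)=\pi_\gamma\bigl(S^{j_n}\beta\bigr)$ for each step-length $j_n$. This is a statement \emph{at the single value} $\varsigma=\gamma$; it is not an analytic identity in $\varsigma$, so there is nothing to which the minimality of $\gamma$ or an intermediate-value argument can be applied. Moreover, as $n\to\infty$ the sequence $\theta^{(n)}\beta$ converges to $\omega$, and continuity of $\pi_\gamma$ only recovers the tautology $\pi_\gamma(\omega)=\pi_\gamma(\omega)$. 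The phrase ``any additional identity obtained \ldots\ would contradict the minimality'' is therefore unjustified as written: you have produced relations, but not shown that any of them is impossible.

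The paper avoids this difficulty by a direct computation rather than contradiction. Instead of sandwiching with $\theta\alpha\prec\theta\beta$ (whose $\pi_\gamma$-values coincide, which is exactly why your iteration never closes), it first picks a third point $\theta\in\Omega$ strictly between $\sigma$ and $\omega$, then chooses $k$ large enough that the cylinder $\{\eta:\eta|_k=\theta|_k\}$ separates $\sigma$ from $\omega$, and uses that the extreme points of such a cylinder in $\overline{\Omega}$ are of the form $\theta|_k\,0\beta$ and $\theta|_k\,\alpha$ (or $\theta|_k\,\beta$ and $\theta|_k\,1\alpha$). The crucial point is that these two sequences have a \emph{strictly positive} gap at $\varsigma=\gamma$:
\[
\pi_\gamma(\theta|_k\,\alpha)-\pi_\gamma(\theta|_k\,0\beta)
=\gamma^{k}\bigl(\pi_\gamma\alpha-\gamma\,\pi_\gamma\beta\bigr)
=\gamma^{k}(1-\gamma)\,\pi_\gamma\alpha>0,
\]
using $\pi_\gamma\alpha=\pi_\gamma\beta$. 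Then Lemma~\ref{lemmaA} gives $\pi_\gamma\sigma\le\pi_\gamma(\theta|_k\,0\beta)<\pi_\gamma(\theta|_k\,\alpha)\le\pi_\gamma\omega$. The missing idea in your approach is precisely this: one must insert between $\sigma$ and $\omega$ two addresses whose $\pi_\gamma$-difference is \emph{provably nonzero}, and the pair $(\theta\alpha,\theta\beta)$ from Lemma~\ref{lemmaA0} does not do this.
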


\begin{proof}
Let $\sigma,\omega\in\Omega$ and $\sigma\prec\omega.$ Then we can find
$\theta\in\Omega$ such that $\sigma\prec\theta\prec\omega$. It follow from
\cite[(3)]{mihalache} and Proposition \ref{trapprop} we can find $k$ such that
$\sigma|_{k}\neq\theta|_{k}\neq\omega|_{k}$ and either $\{\theta|_{k}%
0\beta,\theta|_{k}\alpha\}\subset\overline{\Omega}$ or $\{\theta|_{k}%
\beta,\theta|_{k}1\alpha\}\subset\overline{\Omega}$. (This expresses the fact
that the $sup$ and $inf$ of each of the cylinder sets
\[
\left(  \theta_{0}\theta_{1}...\theta_{k-1}\right)  :=\{\eta\in\Omega
:\eta|_{k}=\theta|_{k}\}
\]
must belong to the inverse images of the critical point, together with
$\overline{0}$ and $\overline{1}$, see \cite[(3)]{mihalache}$.$ By taking $k$
sufficiently large the possibilities $\overline{0}$ or$\overline{\text{ }1}$
are ruled out by density of the set of all inverse images of the critical
point, see \cite[(4)]{mihalache}.)

If $\theta_{k}\theta_{k+1}=01$ then $\theta|_{k}0\beta\in\overline{\Omega}$,
$\theta|_{k}\alpha\in\Omega,$ and
\[
\sigma\prec\theta|_{k}0\beta\prec\theta|_{k}\alpha\prec\omega;
\]
and if $\theta_{k}\theta_{k+1}=10$ then $\theta|_{k}\beta\in\overline{\Omega}%
$, $\theta|_{k}1\alpha\in\Omega,$ and
\[
\sigma\prec\theta|_{k}\beta\prec\theta|_{k}1\alpha\prec\omega.
\]

Suppose $\theta_{k}\theta_{k+1}=01.$ It follows using Lemma \ref{lemmaA} that
\[
\pi_{\gamma}\sigma\leq\pi_{\gamma}\theta|_{k}0\beta\text{ and }\pi_{\gamma
}\theta|_{k}\alpha\leq\pi_{\gamma}\omega\text{.}%
\]
Now observe that
\begin{align*}
\pi_{\gamma}\theta|_{k}\alpha-\pi_{\gamma}\theta|_{k}0\beta &  =\gamma^{k}%
(\pi_{\gamma}\alpha-\pi_{\gamma}0\beta)\\
&  =\gamma^{k}(\pi_{\gamma}\alpha-\gamma\pi_{\gamma}\beta)\\
&  =\gamma^{k}(1-\gamma)\pi_{\gamma}\alpha>0.
\end{align*}

It follows that
\[
\pi_{\gamma}\sigma<\pi_{\gamma}\omega.
\]
A similar calculation deals with the case $\theta_{k}\theta_{k+1}=01$. The
possibility that there is no $k$ such that $\theta_{k}\theta_{k+1}%
\in\{01,10\}$ can be dealt with by related arguments.
\end{proof}

\begin{lemma}
\label{lemmaC} Let $\hat{\varsigma}\in(0,1]$ be maximal such that
$\pi_{\varsigma}\alpha<\pi_{\varsigma}\beta$ for all $\varsigma\in
(0,\hat{\varsigma}).$ Then
\[
(\pi_{\varsigma}\tau^{+}\rho-\pi_{\varsigma}\tau\rho)<(1-\varsigma
)/(1+\varsigma)
\]
for all $\varsigma\prec\hat{\varsigma}$.
\end{lemma}

\begin{proof}
Note the identity, for any $\sigma\in I^{\infty}$ and any $\varsigma\in(0,1),$%
\[
\pi_{\varsigma}\sigma=(1-\varsigma)\sigma_{0}+\varsigma\pi_{\varsigma}%
S\sigma\text{.}%
\]
Note too, by a simple calculation, $\tau^{+}\rho\prec S\tau\rho$ (i.e.
$\beta\prec S\alpha$) and $S\tau^{+}\rho\prec\tau\rho$ (i.e. $S\beta
\prec\alpha$)$.$ Hence, if $\varsigma\in(0,\hat{\varsigma})$, then by Lemma
\ref{lemmaA},
\begin{align*}
\pi_{\varsigma}\tau\rho &  =(1-\varsigma)(\tau\rho)_{0}+\varsigma
\pi_{\varsigma}S\tau\rho>(1-\varsigma)(\tau\rho)_{0}+\varsigma\pi_{\varsigma
}\tau^{+}\rho,\\
\pi_{\varsigma}\tau^{+}\rho &  =(1-\varsigma)(\tau^{+}\rho)_{0}+\varsigma
\pi_{\varsigma}S\tau^{+}\rho<(1-\varsigma)(\tau^{+}\rho)_{0}+\varsigma
\pi_{\varsigma}\tau\rho,
\end{align*}
whence, subtracting the first equation from the second, we get%
\begin{align*}
\pi_{\varsigma}\tau^{+}\rho-\pi_{\varsigma}\tau\rho &  <(1-\varsigma)(\tau
^{+}\rho)_{0}+\varsigma\pi_{\varsigma}\tau\rho-(1-\varsigma)(\tau\rho
)_{0}-\varsigma\pi_{\varsigma}\tau^{+}\rho\\
&  =(1-\varsigma)((\tau^{+}\rho)_{0}-(\tau\rho)_{0})-\varsigma(\pi_{\varsigma
}\tau^{+}\rho-\pi_{\varsigma}\tau\rho)
\end{align*}
which implies%
\[
(\pi_{\varsigma}\tau^{+}\rho-\pi_{\varsigma}\tau\rho)(1+\varsigma
)<(1-\varsigma)((\tau^{+}\rho)_{0}-(\tau\rho)_{0})
\]
whence%
\[
(\pi_{\varsigma}\tau^{+}\rho-\pi_{\varsigma}\tau\rho)<(1-\varsigma
)/(1+\varsigma).
\]

\end{proof}

It follows that either there exists $\gamma$ as described in Theorem
\ref{theorem1} with $\gamma<1$, or else $\pi_{\varsigma}\tau^{+}\rho
<\pi_{\varsigma}\tau\rho$ for all $\varsigma<1$ and $\lim_{\varsigma
\rightarrow1}(\pi_{\varsigma}\tau^{+}\rho-\pi_{\varsigma}\tau\rho)=0$.

Later we will show that the second option implies that the system must have
zero entropy, which is impossible, so the second option here cannot occur. But
first we explain why, in the first case, Theorem \ref{theorem1} is implied.

\begin{lemma}
\label{lemmaB} Let there exist $\gamma$ as described in Theorem \ref{theorem1}
and let $\gamma<1$. Let $p=\tau(\rho)(\gamma)(=\pi_{\gamma}\alpha)$ as in the
statement of Theorem \ref{theorem1}. Then the address space of $L_{\gamma
,p}:[0,1]\rightarrow\lbrack0,1]$ equals $\Omega.$
\end{lemma}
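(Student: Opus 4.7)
\emph{Plan.} First I reduce the claim to two symbolic equalities and then prove them by an inductive semiconjugacy argument. Consider the affine IFS
$$\mathcal{G}=([0,1];\ g_0(x)=\gamma x,\ g_1(x)=\gamma x+1-\gamma)$$
masked at $p$, so its masked dynamical system is $L_{\gamma,p}$, with masked sections $\tau_L,\tau_L^+$ corresponding to the masks $\{[0,p],(p,1]\}$ and $\{[0,p),[p,1]\}$. Applying Proposition \ref{mihalachethm}(vii) to $\mathcal{G}$ gives
$$\Omega_L=\{\sigma\in I^\infty:S^k\sigma\in[\overline{0},\tau_L(p)]\cup(\tau_L^+(p),\overline{1}]\text{ for all }k\in\mathbb{N}\},$$
and the identical expression for $\Omega$ uses $\alpha,\beta$. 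Hence $\Omega_L=\Omega$ will follow once I verify $\tau_L(p)=\alpha$ and $\tau_L^+(p)=\beta$.

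The engine of the proof is the algebraic identity
$$\pi_\gamma(S\sigma)=\frac{\pi_\gamma(\sigma)-(1-\gamma)\sigma_0}{\gamma},$$
which matches the two affine branches of $L_{\gamma,p}$: if $\sigma_0=0$ then $\pi_\gamma(S\sigma)=\pi_\gamma(\sigma)/\gamma$, and if $\sigma_0=1$ then $\pi_\gamma(S\sigma)=\pi_\gamma(\sigma)/\gamma+1-1/\gamma$. Combined with the \emph{placement} claim
$$\sigma\in\Omega,\ \sigma_0=0\ \Longrightarrow\ \pi_\gamma(\sigma)\in[0,p],\qquad\sigma\in\Omega,\ \sigma_0=1\ \Longrightarrow\ \pi_\gamma(\sigma)\in(p,1],$$
this yields the semiconjugacy $L_{\gamma,p}\circ\pi_\gamma|_\Omega=\pi_\gamma\circ S|_\Omega$. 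Iterating from $\alpha\in\Omega$ with $\pi_\gamma(\alpha)=p$ gives $L_{\gamma,p}^k(p)=\pi_\gamma(S^k\alpha)$ and shows that the $k$-th digit of $\tau_L(p)$ equals $\alpha_k$, so $\tau_L(p)=\alpha$. The parallel argument for $L_{\gamma,p}^+$ (mask $\{[0,p),[p,1]\}$) applied to $\beta\in\Omega_+$ yields $\tau_L^+(p)=\beta$.

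It remains to establish the placement claim. If $\sigma_0=0$ then Proposition \ref{mihalachethm}(vii) forces $\sigma\preceq\alpha$, and Lemma \ref{lemmaA} gives $\pi_\gamma(\sigma)\leq\pi_\gamma(\alpha)=p$. If $\sigma_0=1$ then $\sigma\in(\beta,\overline{1}]$, so $\sigma\succ\beta$ strictly, since $\beta\notin\Omega$. Lemma \ref{lemmaA0} produces $\theta\in I^k$ with $\theta\alpha\in\Omega$ and $\beta\preceq\theta\alpha\prec\theta\beta\preceq\sigma$; because $\pi_\gamma(\alpha)=\pi_\gamma(\beta)=p$, multiplication by $\gamma^k$ and addition of the common prefix term give $\pi_\gamma(\theta\alpha)=\pi_\gamma(\theta\beta)$. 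Applying Lemma \ref{lemmaA} to $\beta\preceq\theta\alpha$ gives $p\leq\pi_\gamma(\theta\alpha)$, and Lemma \ref{lemmastrict} applied to the pair $\theta\alpha,\sigma\in\Omega$ with $\theta\alpha\prec\sigma$ gives $\pi_\gamma(\theta\alpha)<\pi_\gamma(\sigma)$. Hence $\pi_\gamma(\sigma)>p$.

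The most delicate step is this strict inequality, because Lemma \ref{lemmastrict} applies only to pairs drawn from $\Omega$ while the reference point $\beta$ lies in $\Omega_+\setminus\Omega$. The interpolation by $\theta\alpha$ via Lemma \ref{lemmaA0}, together with the identity $\pi_\gamma(\theta\alpha)=\pi_\gamma(\theta\beta)$, is what transfers the weak inequality available at $\beta$ into the strict inequality needed at $\sigma$; the symmetric case for $\tau_L^+(p)=\beta$ is handled by the same Lemma \ref{lemmaA0} interpolation, now placing a $\theta\alpha\in\Omega$ between $S^k\beta$ and $\alpha$.
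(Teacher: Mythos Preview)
Your proof is correct and follows the same strategy as the paper's: reduce $\Omega_L=\Omega$ to the two itinerary identities $\tau_L(p)=\alpha$ and $\tau_L^+(p)=\beta$ via Proposition~\ref{mihalachethm}(vii), then verify these by the semiconjugacy $L_{\gamma,p}\circ\pi_\gamma=\pi_\gamma\circ S$ on $\Omega$ (and its $+$ analogue), with Lemmas~\ref{lemmaA} and~\ref{lemmastrict} supplying the branch-placement inequalities. The paper's own proof is a compressed version of exactly this.

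Your write-up is in fact more careful than the paper's on one point worth noting. The paper simply invokes Lemma~\ref{lemmastrict} to get the strict inequality $\pi_\gamma(\sigma)>p$ when $\sigma_0=1$, but Lemma~\ref{lemmastrict} is stated only for pairs in $\Omega$, while the natural comparison point $\beta$ lies in $\Omega_+\setminus\Omega$. Your interpolation via Lemma~\ref{lemmaA0}, inserting $\theta\alpha\in\Omega$ with $\beta\preceq\theta\alpha\prec\sigma$ and exploiting $\pi_\gamma(\theta\alpha)=\pi_\gamma(\theta\beta)$, is exactly the right way to make that step rigorous; the symmetric interpolation for the $+$ case (placing $\theta\alpha\in\Omega$ strictly below $\alpha$ and using Lemma~\ref{lemmastrict} on the pair $\theta\alpha,\alpha\in\Omega$) works for the same reason. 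So your argument fills a small gap the paper glosses over, but the underlying route is the same.
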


\begin{proof}
The address space of $L_{\gamma,p}$ is uniquely determined by the two
itineraries of $p=\pi_{\gamma}\tau\rho$. By direct calculation it is verified
that these addresses must be $\alpha=\tau\rho$ and $\beta=\tau^{+}\rho$.
Simply apply $L_{\gamma,p}$ to $\pi_{\gamma}\tau\rho$ and $L_{\gamma,p}^{+}$
to $\pi_{\gamma}\tau^{+}\rho$, and with the aid of Lemma \ref{lemmaA},
validate that the addresses of these two points, $\pi_{\gamma}\tau\rho$ and
$\pi_{\gamma}\tau^{+}\rho$ are indeed $\tau\rho$ and $\tau^{+}\rho$.
(\textbf{The assertion }$\sigma\prec\omega\Rightarrow$\textbf{ }$\pi_{\gamma
}(\sigma)<\pi_{\gamma}(\omega),$\textbf{ assured by Lemma \ref{lemmastrict},
implies that at each iterative step, in the computation of }$\pi_{\gamma}%
\tau\rho$\textbf{, the current point lies in the domain of the appropriate
branch of }$L_{\gamma,p}$\textbf{.})
\end{proof}

\begin{lemma}
\label{lemmaE} Let there exist $\gamma$ as described in Theorem \ref{theorem1}
and let $\gamma<1$. Then Theorem \ref{theorem1} holds.
\end{lemma}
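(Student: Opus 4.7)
The plan is to assemble the lemma as a corollary of the preceding lemmas together with Corollary \ref{corollary1}, by identifying the affine system $L_{\gamma,p}$ as the required conjugate of $W$ and reading off its invariants. Concretely, I take $\widetilde{\mathcal{F}}$ to be the affine IFS $\mathcal{F}(\gamma,\gamma)=([0,1];\,\gamma x,\,\gamma x+(1-\gamma))$, with mask parameter $\widetilde{\rho}=p$; the associated masked dynamical system is $L_{\gamma,p}$. By Lemma \ref{lemmaB} its masked address space coincides with $\Omega$, and hence, by Proposition \ref{mihalachethm}(vii), its critical itineraries $\widetilde{\tau}(p)$ and $\widetilde{\tau}^{+}(p)$ are exactly $\alpha=\tau(\rho)$ and $\beta=\tau^{+}(\rho)$. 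This is condition (ii) of Corollary \ref{corollary1}, so condition (iii) holds: $W$ and $L_{\gamma,p}$ are topologically conjugate via an orientation-preserving homeomorphism $H:[0,1]\to[0,1]$.

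Next I would make $H$ explicit. The implication (ii)$\Rightarrow$(iii) in Corollary \ref{corollary1} is proved via Proposition \ref{basicconjugacythm}(iv), so the conjugating homeomorphism is the fractal transformation $T_{\mathcal{F}\widetilde{\mathcal{F}}}=\widetilde{\pi}\circ\tau=\pi_{\gamma}\circ\tau$. Using the closed-form formula for $\pi_{\gamma}$ derived in Section 3.3,
\[
H(x)=\pi_{\gamma}(\tau(x))=(1-\gamma)\sum_{k=0}^{\infty}\tau(x)_{k}\gamma^{k}=\tau(x)(\gamma),
\]
for every $x\in[0,1)$. The boundary value $H(1)=1$ follows from $\tau(1)=\overline{1}$ and the elementary sum $\pi_{\gamma}(\overline{1})=(1-\gamma)\sum_{k\ge0}\gamma^{k}=1$; similarly $H(\rho)=\pi_{\gamma}(\alpha)=p$, confirming that $p$ as defined is the discontinuity of the conjugate system.

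For the entropy claim I would invoke the standard fact that a piecewise linear interval map with constant slope $1/\gamma$ on each of its (finitely many) laps has topological entropy $\log(1/\gamma)=-\ln\gamma$ (this follows from lap-counting as in Misiurewicz--Szlenk, or from Milnor--Thurston's kneading theory). Since topological entropy is a conjugacy invariant, $h(W)=h(L_{\gamma,p})=-\ln\gamma$. The uniqueness assertion of Theorem \ref{theorem1} is built into the hypothesis: $\gamma$ is characterized as the infimum of the set $\{\zeta\in(0,1):\pi_{\zeta}\alpha=\pi_{\zeta}\beta\}$ (cf.\ Lemmas \ref{lemmaA}, \ref{lemmaC}), and the strict-inequality clause on $[0.5,\gamma)$ is immediate from this infimum characterization together with the fact that $\pi_{1/3}$ is order preserving and $\alpha\prec\beta$.

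The main obstacle has already been cleared by Lemma \ref{lemmaB}: one has to know that iterating $L_{\gamma,p}$ on the two candidate itineraries $\alpha,\beta$ really does reproduce them, which in turn hinges on the strict monotonicity $\pi_{\gamma}(\sigma)<\pi_{\gamma}(\omega)$ for $\sigma\prec\omega$ in $\Omega$ supplied by Lemma \ref{lemmastrict}. Once that is in hand, the present lemma is essentially an identification: conjugacy plus Proposition \ref{basicconjugacythm}(iv) transfer the description of $H$, the value of $p$, and the value of $h(W)$ from the explicitly computable affine model back to $W$.
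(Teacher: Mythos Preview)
Your proposal is correct and follows essentially the same route as the paper: the paper's proof consists of a single sentence invoking Lemma \ref{lemmaB} together with Proposition \ref{mihalachethm} (iii)--(iv), and explicitly notes that this is equivalent to invoking Corollary \ref{corollary1}, which is exactly the path you take. Your version supplies the details the paper omits (the explicit identification $H=\pi_{\gamma}\circ\tau$, the formula $H(x)=\tau(x)(\gamma)$, and the entropy computation for $L_{\gamma,p}$), but the logical skeleton is the same.
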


\begin{proof}
This follows from Lemma \ref{lemmaB} together with Proposition
\ref{mihalachethm} parts (iii) and (iv). (Equivalently it follows from
Corollary \ref{corollary1}: I think I have in effect duplicated the intended
proof of Corollary \ref{corollary1}.)
\end{proof}

\begin{lemma}
\label{lemmaD} The equations
\[
\tau(\rho)(\gamma)=\tau(\rho+)(\gamma),\text{ with }\tau(\rho)(\varsigma
)<\tau(\rho+)(\varsigma)\text{ for }\varsigma\in\lbrack0.5,\gamma)\text{,}%
\]
possess a unique solution $\gamma<1$.
\end{lemma}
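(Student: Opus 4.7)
The plan is to analyze the continuous function $F(\varsigma) := \tau(\rho+)(\varsigma) - \tau(\rho)(\varsigma) = \pi_\varsigma\beta - \pi_\varsigma\alpha$ and produce its smallest zero in $[1/2,1)$. First I would verify the endpoint signs: since $\pi_{1/2}(\sigma) = \sum_k \sigma_k/2^{k+1}$ is the standard binary-expansion map and is weakly order-preserving with respect to $\prec$ on $I^\infty$, and since $\alpha = 01\alpha_2\ldots \prec 10\beta_2\ldots = \beta$ by Proposition \ref{trapprop}(iii), one obtains $F(1/2) \geq 0$. In the degenerate case $F(1/2) = 0$, the choice $\gamma := 1/2$ satisfies the lemma vacuously (the strict-inequality clause on the empty interval $[1/2,\gamma)$ is trivially true), so henceforth I assume $F(1/2) > 0$. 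At the right endpoint, $F(1) = 0$ from the $(1-\varsigma)$ prefactor.

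I would then set $\gamma := \inf\{\varsigma \in [1/2,1] : F(\varsigma) = 0\}$. Continuity of $F$ together with $F(1/2) > 0$ forces $\gamma > 1/2$ and $F(\gamma) = 0$, while the infimum property yields $F > 0$ on $[1/2, \gamma)$, so the two conditions of the lemma are met. Uniqueness follows at once from this characterization: any competing solution $\gamma' < \gamma$ would be subject to the strict-inequality clause attached to $\gamma$, giving $F(\gamma') > 0$, in contradiction with $F(\gamma') = 0$ as part of $\gamma'$ being a solution. Hence the only substantive task remaining is to show $\gamma < 1$.

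The main obstacle is ruling out the scenario $F > 0$ on all of $(0,1)$, and my plan follows the strategy foreshadowed in the paragraph preceding the lemma. Assuming for contradiction that $F$ has no zero in $(0,1)$, the second alternative of Lemma \ref{lemmaA} yields strict order-preservation of $\pi_\varsigma$ on $\overline{\Omega}$ for every $\varsigma \in (0,1)$. For each such $\varsigma$, Proposition \ref{basicconjugacythm}(iv) then furnishes a topological conjugacy between $W$ and the restriction of the slope-$(1/\varsigma)$ piecewise-linear map $L_\varsigma$ to the invariant set $\pi_\varsigma(\overline{\Omega}) = [0, \pi_\varsigma\alpha] \cup [\pi_\varsigma\beta, 1]$. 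Because topological entropy is a conjugacy invariant, one concludes $h(W) \leq h(L_\varsigma) = -\log\varsigma$ for all $\varsigma < 1$; letting $\varsigma \uparrow 1$ forces $h(W) \leq 0$, contradicting the standing hypothesis from Section \ref{introsec} that $W$ has slope exceeding $1/\lambda > 1$ and hence $h(W) \geq \log(1/\lambda) > 0$. The delicate step I anticipate needing care is the bound $h(W) \leq h(L_\varsigma)$: since the conjugacy identifies $W$ only with $L_\varsigma$ restricted to a proper invariant subset, one must argue that this restricted entropy is itself bounded by $-\log\varsigma$, which follows from uniform expansion of $L_\varsigma$. An alternative, more self-contained route is a direct Milnor--Thurston-style count of admissible $n$-cylinders in $\overline{\Omega}$ via Proposition \ref{mihalachethm}(vii), showing that absence of a zero of $F$ in $(0,1)$ forces subexponential cylinder growth and hence zero entropy.
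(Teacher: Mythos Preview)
Your approach is essentially the paper's: assume no zero of $\pi_\varsigma\beta-\pi_\varsigma\alpha$ occurs in $(0,1)$, use the resulting strict order-preservation of $\pi_\varsigma$ on $\overline{\Omega}$ to embed the $W$-dynamics (via its symbolic model) into the slope-$1/\varsigma$ system $L_\varsigma$, and derive $h(W)\le -\log\varsigma\to 0$, contradicting $h(W)\ge\log(1/\lambda)>0$. Two small corrections that do not affect the strategy: the image $\pi_\varsigma(\overline{\Omega})$ is a Cantor-type set (not simply $[0,\pi_\varsigma\alpha]\cup[\pi_\varsigma\beta,1]$), and the conjugacy is between $(S,\overline{\Omega})$ and $(L_\varsigma,\pi_\varsigma(\overline{\Omega}))$ rather than something Proposition~\ref{basicconjugacythm}(iv) yields directly---but you already flag this as the delicate step, and your proposed fix (uniform expansion of $L_\varsigma$ bounding the restricted entropy) is exactly what is needed.
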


\begin{proof}
If $\pi_{\varsigma}\tau^{+}\rho<\pi_{\varsigma}\tau\rho$ for all $\varsigma<1$
and $\lim_{\varsigma\rightarrow1}(\pi_{\varsigma}\tau^{+}\rho-\pi_{\varsigma
}\tau\rho)=0$ then by (similar argument to the one in Lemma\ref{lemmaC}) that
$x$ implies $\pi_{\varsigma}\tau x<\pi_{\varsigma}\tau y$ for all
$\varsigma<1$. It follows that $\pi_{\varsigma}\tau([0,1])$ is an invariant
subset of the dynamical system $L_{p,\varsigma}$ for all $\varsigma<1,$ whence
the topological entropy of $W$ is less than $\log1/\lambda$ which is
impossible because $W$ has slope larger than or equal to $1/\lambda$.)
\end{proof}

Theorem \ref{theorem1} follows from Lemmas \ref{lemmaE} and \ref{lemmaD}.

\section{Proof of Theorem \ref{theorem2ndpart}}

This is straightfoward.

\begin{acknowledgement}
We thank Jed Keesling, Nina Snigreve, and Tony Samuels, for many helpful
discussions in connection with this work. We thank Konstantin Igudesman for
useful comments on an early version of this work.
\end{acknowledgement}

\end{document}